\newtheorem{lemma}{Lemma}[section]
\newtheorem{theorem}[lemma]{Theorem}
\newtheorem{remark}[lemma]{Remark}
\newtheorem{proposition}[lemma]{Proposition}
\numberwithin{equation}{section}
\title{\textsf{Filtration, automorphisms and classification of the infinite dimensional  odd Contact
superalgebras}}
\author{\textsc{Jixia Yuan$^{1,}$}\footnote{Correspondence:
 jxy@hrbnu.edu.cn (J. Yuan), wendeliu@ustc.edu.cn} \footnote{Supported by  the NSF of HLJ
  Province (A200903)}\;  \textsc{and Wende Liu$^{2,}$}
 \footnote{Supported by the NSF (10871057)
  of China and the NSF  (A200802) of HLJ Province, China} \\
  \\
  \textit{$^{1}$Department of Mathematics},
  \textit{Harbin Institute of Technology}\\
  \textit{Harbin 150006, China}\\
\\
  \ \ \textit{$^{2}$School of Mathematical Sciences},
  \textit{Harbin Normal University} \\
  \textit{Harbin 150025, China}
  }
\date{ }
\begin{document}
\maketitle

\begin{quotation}
\noindent\textbf{Abstract}:   The principal  filtration of the infinite-dimensional odd Contact Lie superalgebra over
a field of characteristic $p>2$  is proved to be invariant under the automorphism group by investigating ad-nilpotent elements and determining certain invariants such as subalgebras generated by some ad-nilpotent elements. Then, it is proved that two automorphisms coincide  if and only if they coincide on
 the  $-1$  component  with respect to the principal grading. Finally, all the odd Contact superalgebras are   classified up to isomorphisms. \\

\noindent \textbf{Mathematics Subject Classification 2000}: 17B50, 17B40, 17B65

\noindent \textbf{Keywords}: Lie superalgebras, filtration, automorphisms, classification
  \end{quotation}

  \setcounter{section}{-1}
\section{Introduction}

\noindent As is well known, filtration techniques are of great importance in the structure
and classification theories of Lie (super)algebras.
A descending filtration of a Lie superalgebra $L$
 is a sequence of $\mathbb{Z}_{2}$-graded spaces $L=L_0\supset L_1\supset\cdots$ for which
 $[L_i, L_j]\subset L_{i+j}$ holds for all $i,j.$ In the situation that
 the Killing form on a simple Lie (super)algebra is degenerate,
 the filtration structure plays a
particular role. A filtration $L=L_0\supset L_1\supset\cdots$ of a Lie superalgebra $L$ is said to be invariant provided that
 $\varphi(L_i)\subset L_i$
for all $i$ and all automorphisms  $\varphi$ of $L$. We know that the simple Lie (super)algebras of
 Cartan type possess various natural filtration structures, for which
the invariance may be used to make an insight for the  intrinsic
properties and the automorphism groups of those Lie (super)algebras.
The filtration structures have been studied  for the finite dimensional Lie
superalgebras of Cartan type, for example, in \cite{lz1,zf,zn}
the invariance of the natural filtrations  was determined for the
generalized Witt superalgebras, the special superalgebras, the
Hamiltonian superalgbras and the odd Hamiltonian superalgebras. The
reader is also refereed to \cite{lz2,zl5} for the similar work on certain
infinite dimensional modular Lie superalgebras of Cartan type.
Let us state the main results of this paper. Write $KO(n,n+1)$
for
  the infinite dimensional
  odd Contact Lie superalgebras over a field of prime characteristic  (see Sec.1
for  a definition and more details).  $KO(n,n+1)$ has a canonical filtration structure known as principal. By means of
characterizing ad-nilpotent elements of $KO(n,n+1)$ we first obtain in this paper that:
\begin{itemize}
\item (Theorem \ref{t15}) \textsf{The principal filtration  of the   odd Contact  Lie
superalgebra is invariant under the automorphism group of the Lie
superalgebra.}
\end{itemize}
As a consequence the automorphisms can be characterized as follows:
\begin{itemize}
\item   (Theorem \ref{t17}) \textsf{Two
automorphisms of the odd Contact Lie superalgebra coincide  if and only if they coincide on
 the $-1$ component   with respect to the principal
grading.}
\end{itemize}
Finally we classify all the infinite dimensional
  odd Contact Lie superalgebras up to isomorphisms:
\begin{itemize}
\item (Theorem \ref{t16}) \textsf{$KO(n,n+1)\cong KO(m,m+1)$ if and only if $n=m.$}
\end{itemize}

\section{Preliminaries}

Throughout $\mathbb{F}$ is a field of characteristic $p>2$;
$\mathbb{Z}_2:= \{\bar{0},\bar{1}\}$ is the additive group of two
elements; $\mathbb{N}$ and $\mathbb{N}_0$ are the sets of positive
integers and nonnegative integers, respectively. Let
$\mathcal{O}(n)$ be the divided power algebra   with
$\mathbb{F}$-basis $\{x^{(\alpha)}\mid \alpha\in
\mathbb{N}_{0}^{n}\}$. Note that $x^{(0)}:=1\in\mathcal{O}(n)$,
where $0=(0,\ldots,0)\in \mathbb{N}_{0}^{n}.$ For
$\varepsilon_i:=(\delta_{i1},\delta_{i2},\ldots,\delta_{in})\in
\mathbb{N}_{0}^{n}$, write $x_i$ for $x^{(\varepsilon_i)}$, where
$i=1,\ldots,n.$ Let $\Lambda(m)$ be the exterior superalgebra over
$\mathbb{F}$ in $m$ variables $x_{n+1},x_{n+2},\ldots,x_{n+m}$.
 Set
$$\mathbb{B}(m):=\left\{ \langle i_1,i_2,\ldots,i_k\rangle \mid n+1\leq i_1<i_2<\cdots <i_k\leq n+m,\;
k\in \overline{0, m}\right\}.$$
 For
 $u:=\langle i_1,i_2,\ldots,i_k\rangle \in \mathbb{B}(m),$ write $ |u| :=k$
 and  $x^u:=x_{i_1}x_{i_2}\cdots x_{i_k}.$
  Notice that we also denote the index set $\{i_1,i_2,\ldots,i_k\}$ by $u$ itself.
  For $u, v \in \mathbb{B}(m)$ with $u\cap
   v=\emptyset,$ define $u+ v$ to be the uniquely determined element
  $w\in \mathbb{B}(m)$ such that $ w=u\cup
  v.$ If $v\subset u,$ define $u-v$ to be $w\in
  \mathbb{B}(m)$ such that $w=u\setminus v.$
  Clearly, the associative superalgebra
$\mathcal{O}(n,m):=\mathcal{O}(n)\otimes \Lambda(m)$ has a so-called
\textit{standard} $\mathbb{F}$-\textit{basis} $\{ x^{(\alpha )
}x^u\mid (\alpha,u) \in \mathbb{N}_{0}^{n}\times \mathbb{B}(m)\}.$

 Let
$\partial_r$ be the superderivations of $\mathcal{O}(n,m) $ defined
by $\partial_{r}(x^{(\alpha)})=x^{(\alpha-\varepsilon_{r})}$ for
$r\in \overline{1,n} $ and $\partial_{r}(x_{s})=\delta_{rs}$ for
$r,s\in \overline{1, n+m}.$ Here  $\overline{1,n}$ is the set of integers $1,2,\ldots,n.$ The  generalized Witt superalgebra  $
W\left(n,m\right)$ is $\mathbb{F}$-spanned by all $f_r
\partial_r,$ where $ f_r\in \mathcal{O}(n,m),$ $r\in
\overline{1,n+m}.$   Note that $W(n,m) $ is a free $ \mathcal{O}
\left(n,m\right)$-module with basis $ \{
\partial_r\mid r\in
 \overline{1,n+m}\}.$ In particular, $W (n,m ) $ has a
\textit{standard} $\mathbb F$-\textit{basis}
 $\{x^{(\alpha)}x^{u}\partial_r\mid (\alpha,u,r)\in
\mathbb{N}_{0}^{n}\times\mathbb{B}(m)\times\overline{1,n+m}\}.$

 For an $n$-tuple
$\alpha:=(\alpha_1,\ldots,\alpha_n)\in \mathbb{N}_0^n$, put
$|\alpha|:=\sum_{i=1}^n\alpha_i.$ When $m=n+1$, we usually write
$\mathcal {O}:=\mathcal {O}(n,n+1),$ $W:= W(n,n+1),$
$\mathbb{A}:=\mathbb{N}_{0}^{n}$ and $\mathbb{B}:=\mathbb{B}(n+1).$
If $u:=\langle i_{1},\ldots, i_{r}\rangle\in \mathbb{B},$ put
\[\parallel u\parallel:= \left\{\begin{array}{ll}
|u|+1, &\mbox{if}\; 2n+1\in u
\\|u|, &\mbox{if}\; 2n+1\notin u.
\end{array}\right.\]
 Recall the   standard $\mathbb{Z}$-grading,
$ \mathcal{O}=\oplus_{i\geq 0}\mathcal{O}_{\mathbf{s},[i]},$ where
$$\mathcal{O}_{\mathbf{s},[i]}:={\rm
span}_{\mathbb{F}}\{x^{(\alpha)}x^u\mid |\alpha|+|u|=i, \alpha\in
\mathbb{A}, u\in \mathbb{B}\}.$$
It induces naturally the   standard
grading $ W=\oplus_{i\geq -1}W_{\mathbf{s},[i]},$ where
$$W_{\mathbf{s},[i]}:={\rm span}_{\mathbb{F}}\{f\partial_j\mid
f\in{\mathcal{O}_{\mathbf{s},[i+1]}}, j\in \overline{1,2n+1}\}.$$
The standard gradings of $ \mathcal{O}$ and $W$ are  of type
$(1,\ldots,1\mid 1,\ldots,1).$ Let $W_{\mathbf{s},i}=\sum_{j\geq
i}W_{\mathbf{s},[j]}.$ Then $(W_{\mathbf{s},i})_{i\geq-1}$ is called
the standard filtration of $W.$

We shall also use the   principal grading $\mathcal{O}=\oplus_{i\geq
0}\mathcal{O}_{\mathbf{p},[i]},$ where
  $$
 \mathcal{O}_{\mathbf{p},[i]}:={\rm
span}_\mathbb{F}\{x^{(\alpha)}x^u\mid |\alpha|+\|u\|=i,\,\alpha\in
\mathbb{A},\, u\in \mathbb{B}\},$$ and the   principal grading
$W=\oplus_{i\geq-2} W_{\mathbf{p},[i]},$ where
$$ W_{\mathbf{p},[i]}:={\rm
span}_{\mathbb{F}}\{f\partial_j\mid
f\in{\mathcal{O}_{\mathbf{p},[i+1+\delta_{j,2n+1}]}},j\in
\overline{1,2n+1}\}$$
 (cf. \cite{k2}).  The principal gradings of $
\mathcal{O}$ and $W$ are   of type
$(1,\ldots,1\mid 1,\ldots,1,2). $

For a   vector superspace $V=V_{\bar{0}}\oplus V_{\bar{1}}, $ we
write $\mathrm{p}(x):=\theta$ for the  parity  of a $\mathbb{Z}_{2}$-homogeneous
element $x\in V_{\theta}, $ $\theta\in \mathbb{Z}_{2}.$ Once the
symbol $\mathrm{p}(x)$ appears, it
 will imply that $x$ is  a $\mathbb{Z}_2$-homogeneous element.

 The odd Contact superalgebra, which is  a subalgebra of $W,$ is
defined as follows (see \cite{k2} for more details):
$$
KO(n,n+1):=\{D_{KO}(a)\mid a\in \mathcal{O}\},
$$
where
$$D_{KO}(a):=T_{H}(a)+(-1)^{\mathrm{p}(a)}\partial_{2n+1}(a)E+ (E(a)-2a )\partial_{2n+1},$$
$$E:=\sum _{i=1}^{2n}x_{i}\partial_{i},\quad T_{H}(a):=\sum
_{i=1}^{2n}(-1)^{\mu(i')\mathrm{p}(a)}\partial_{i'}(a)\partial_{i},$$
\[i':= \left\{\begin{array}{ll}
i+n, &\mbox{if}\; i\in\overline{1, n}
\\i-n, &\mbox{if}\; i\in \overline{n+1, 2n},
\end{array}\right.\quad
\mu(i):=\left\{\begin{array}{ll}\bar{0},  &\mbox{if}\; i\in \overline{1,n}\\
\bar{1},&\mbox{if}\; i\in\overline{n+1,2n+1}.
\end{array}\right. \]
For the operator $T_{H}$ and further information, the reader is referred to \cite{lzw}. Note that  for $a,b\in \mathcal{O}$ (see \cite{k2}),
\begin{equation}\label{liuee1}
[D_{KO}(a),
D_{KO}(b)]=D_{KO}(D_{KO}(a)(b)-(-1)^{\mathrm{p}(a)}2\partial_{2n+1}(a)b).
\end{equation}

 For simplicity, we usually write $KO$ for $KO(n,n+1)$. Note that
 $KO $ has  a so-called principal
$\mathbb{Z}$-grading structure denoted by
$$
KO =\oplus_{i\geq-2}KO _{\mathbf{p},[i]},\quad\mbox{where
$
KO _{\mathbf{p},[i]}:=KO \cap W_{\mathbf{p},[i]}.
$}
$$
In particular,
\begin{eqnarray}\label{e2}
&& KO _{\mathbf{p},[-2]}=\mathbb{F}\cdot D_{KO}(1),\nonumber\\
&&KO _{\mathbf{p},[-1]}={\rm span}_{\mathbb{F}}\{D_{KO}(x_i)\mid
i\in\overline{1,2n}\},\nonumber\\
&& KO _{\mathbf{p},[0]}={\rm span}_{\mathbb{F}}\{D_{KO}(x_{2n+1}),
D_{KO}(x_ix_j)\mid 1\leq i\leq j\leq 2n\}.
\end{eqnarray}
Let
$$X_{\mathbf{p},i}:=\sum_{j\geq i}X_{\mathbf{p},[j]},  \quad\mbox{where $X=W$
or $KO.$}$$  Then $(X_{\mathbf{p},i})_{i\geq-2}$ is called the
principal filtration of $X.$
Recall that  the infinite-dimensional generalized Witt superalgebra
$W(n,n)$ contains the following Lie superalgebra as a subalgebra (see \cite{k2,lh}):
\begin{eqnarray*}
 SHO'(n,n):=\{T_{H}(a)\mid a\in \mathcal
{O}(n,n), \Delta(a)=0\},\quad\mbox{where $\Delta:=\sum_{i=1}^{n}\partial_{i}\partial_{i'}.$}
\end{eqnarray*}

\noindent\textbf{Convention}: In the sequel we shall write $KO _{[i]}$ and
$KO _{i}$ for $KO _{\mathbf{p},[i]}$ and $KO _{\mathbf{p},i},$
respectively.
\section{Ad-nilpotent elements}
Let $L$ be a Lie superalgebra.
An element $y\in L$ is  $\mathrm{ad}$-nilpotent if as a transformation $\mathrm{ad}y$ is nilpotent on $L$.
 Let $R$ be a subalgebra of $L$. Put
\begin{eqnarray*}
&&\mathrm{nil}(R):=\mbox{the set of the elements in $R$ which are ad-nilpotent on $L$},\\
&&\mathrm{span}_{\mathbb{F}}\mathrm{nil}(R):=\mbox{the subspace spanned by $\mathrm{nil}(R)$},\\
&&\mathrm{Nil}(R):=\mbox{the
subalgebra of $L$ generated by $\mathrm{nil}(R).$}
\end{eqnarray*}

\begin{lemma}\label{l1'} Suppose $a\in \mathcal{O}$ is of $\mathbb{Z}$-degree $2$ with respect to the standard grading.
If $\partial_{2n+1}(a)=0$,  that is, if $a\in
\mathcal {O}(n,n)$, then
\begin{equation*}
   [D_{KO}(a),D_{KO}(b)]=D_{KO}(T_{H}(a)(b))\quad \mbox{for all}\;b\in \mathcal{O}.
\end{equation*}
\end{lemma}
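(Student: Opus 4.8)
The plan is to reduce the bracket formula (\ref{liuee1}) using the structural hypothesis $\partial_{2n+1}(a)=0$ and the assumption that $a$ has standard $\mathbb{Z}$-degree $2$. First I would substitute $\partial_{2n+1}(a)=0$ directly into (\ref{liuee1}), which immediately kills the term $-(-1)^{\mathrm{p}(a)}2\partial_{2n+1}(a)b$ and reduces the right-hand side to $D_{KO}(D_{KO}(a)(b))$. So the entire task becomes showing that $D_{KO}(a)(b) = T_{H}(a)(b)$ for every $b\in\mathcal{O}$, i.e.\ that the remaining two summands in the definition of $D_{KO}(a)$, namely $(-1)^{\mathrm{p}(a)}\partial_{2n+1}(a)E$ and $(E(a)-2a)\partial_{2n+1}$, act as zero on $b$.

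Next I would handle those two terms separately. The term $(-1)^{\mathrm{p}(a)}\partial_{2n+1}(a)E$ applied to $b$ is $(-1)^{\mathrm{p}(a)}\partial_{2n+1}(a)\cdot E(b)$, which vanishes on the nose because $\partial_{2n+1}(a)=0$ by hypothesis. For the term $(E(a)-2a)\partial_{2n+1}$ I would use the degree hypothesis: since $a$ has standard $\mathbb{Z}$-degree $2$, Euler's relation gives $E(a) = \big(\sum_{i=1}^{2n}x_i\partial_i\big)(a)$, and because $\partial_{2n+1}(a)=0$ the variable $x_{2n+1}$ does not occur in $a$, so $a$ is homogeneous of total degree $2$ in $x_1,\dots,x_{2n}$ alone; hence $E(a)=2a$ and the coefficient $E(a)-2a$ vanishes. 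Therefore $(E(a)-2a)\partial_{2n+1}$ is the zero operator, and in particular annihilates $b$.

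Combining the two vanishings, $D_{KO}(a)(b) = T_{H}(a)(b)$ for all $b\in\mathcal{O}$, and substituting this back into the reduced form of (\ref{liuee1}) yields $[D_{KO}(a),D_{KO}(b)] = D_{KO}(T_{H}(a)(b))$, as claimed. The only point requiring a little care — the ``main obstacle'', though it is mild — is the justification that $\partial_{2n+1}(a)=0$ together with standard degree $2$ forces $E(a)=2a$: one must be careful that the Euler operator $E$ only sums over $i\in\overline{1,2n}$ and not over the index $2n+1$, so that $E$ genuinely records the degree in the surviving variables; since $a$ lives in $\mathcal{O}(n,n)$ and is $\mathbb{Z}$-homogeneous of degree $2$ there, Euler's identity applies verbatim. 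I would also remark, for cleanliness, that the identity $\Delta(a)=0$ is not needed here (that condition enters only when one wants $T_H(a)$ itself to lie in a smaller subalgebra); Lemma 1.2 holds for every standard-degree-$2$ element of $\mathcal{O}(n,n)$.
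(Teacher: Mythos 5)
Your proof is correct and is essentially the paper's own argument: the paper simply says the lemma ``follows from (\ref{liuee1})'', and your write-up supplies exactly the routine verification intended there (the term with $\partial_{2n+1}(a)$ drops out and $E(a)-2a=0$ for a standard-degree-$2$ element of $\mathcal{O}(n,n)$, so $D_{KO}(a)(b)=T_H(a)(b)$). No gaps; your closing remark that $\Delta(a)=0$ is not needed is also accurate.
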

\begin{proof}
It follows from $(\ref{liuee1}).$
\end{proof}

A nonempty subset $S$ of a Lie superalgebra $L$ is called a Lie-super subset if  $S$ is
closed under the multiplication of $L $ and it spans a  sub-superspace (and then is a Lie superalgebra).
A slight modification of \cite[Theorem 1.3.1]{sf} yields the following lemma.

\begin{lemma}\label{l1'''}
Suppose $V$ is a  vector superspace over $\mathbb{F}$ and
$S$ a Lie-super subset of  Lie superalgebra $\mathfrak{gl}(V)$. If
$S$ consists of nilpotent linear transformations
$\mathrm{span}_{\mathbb{F}}S$ is of finite dimension, then $\mathrm{span}_{\mathbb{F}}S$ is
strictly   triangulable  on $V,$ that is, there is a finite sequence $(V_i)_{0\leq i\leq m}$ of  sub-superspaces
such that
$$0 = V_0\subset V_1\subset \cdots \subset V_m = V;\quad x(V_i)\subset V_{i-1}\quad \mbox{for all}\; x\in \mathrm{span}_{\mathbb{F}}S.$$
\end{lemma}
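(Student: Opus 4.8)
The statement is the Lie-superalgebra form of the Engel--Jacobson theorem, and the plan is to reduce it to the classical ungraded case \cite[Theorem 1.3.1]{sf}. Put $\mathfrak{g}:=\mathrm{span}_{\mathbb{F}}S$, which by hypothesis is a finite-dimensional $\mathbb{Z}_2$-graded subspace of $\mathfrak{gl}(V)$ closed under the super bracket, hence a Lie superalgebra; we may assume the elements of $S$ are $\mathbb{Z}_2$-homogeneous. Let $\mathfrak{A}\subseteq\mathrm{End}(V)$ be the non-unital associative subalgebra generated by $S$. Since a product of homogeneous elements of $\mathfrak{g}$ is again homogeneous, $\mathfrak{A}$ and every power $\mathfrak{A}^{k}$ are $\mathbb{Z}_2$-graded, and hence each $\mathfrak{A}^{k}V$ is a sub-superspace of $V$. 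The one non-formal assertion is that $\mathfrak{A}$ is \emph{nilpotent}, i.e. $\mathfrak{A}^{m}=0$ for some $m$. Granting this, set $V_i:=\mathfrak{A}^{\,m-i}V$ for $0\le i\le m$, where $\mathfrak{A}^{0}V:=V$; then $0=V_0\subseteq V_1\subseteq\cdots\subseteq V_m=V$ is an ascending chain of sub-superspaces, and $x(V_i)\subseteq\mathfrak{A}\cdot\mathfrak{A}^{\,m-i}V=\mathfrak{A}^{\,m-i+1}V=V_{i-1}$ for every $x\in\mathfrak{g}\subseteq\mathfrak{A}$; deleting repetitions gives the required strictly triangulating sequence (no hypothesis on $\dim V$ is needed). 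So it remains to prove that $\mathfrak{A}$ is nilpotent.

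I would first upgrade the hypothesis from $S$ to $\mathfrak{g}$. Write $S=S_{\bar{0}}\cup S_{\bar{1}}$ with $S_{\theta}:=S\cap\mathrm{End}(V)_{\theta}$. Then $S_{\bar{0}}$ is a set of nilpotent even operators, closed under the commutator (which on even elements agrees with the super bracket), spanning $\mathfrak{g}_{\bar{0}}$; by the classical Engel--Jacobson theorem \cite[Theorem 1.3.1]{sf} the associative algebra generated by $S_{\bar{0}}$ is nilpotent, so every element of $\mathfrak{g}_{\bar{0}}$ is a nilpotent operator on $V$. For odd $y\in\mathfrak{g}_{\bar{1}}$ one has $y^{2}=\tfrac12[y,y]\in\mathfrak{g}_{\bar{0}}$ (here $p>2$ is used), so $y^{2}$, and hence $y$, is nilpotent. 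Thus $\mathfrak{g}_{\bar{0}}\cup\mathfrak{g}_{\bar{1}}$ consists of nilpotent operators. Now I would run the Engel induction on $d:=\dim_{\mathbb{F}}\mathfrak{g}$. The cases $d\le 1$ and $\mathfrak{g}$ abelian are elementary (for $\mathfrak{g}$ abelian, a homogeneous basis consists of supercommuting nilpotent operators, the odd ones squaring to zero, and a monomial count bounds the nilpotency index of $\mathfrak{A}$). For the general step, pick a proper $\mathbb{Z}_2$-graded subalgebra $\mathfrak{h}\subsetneq\mathfrak{g}$ of maximal dimension; then $\mathfrak{h}_{\bar{0}}\cup\mathfrak{h}_{\bar{1}}$ still consists of nilpotents, and for homogeneous $h\in\mathfrak{h}$ the operator $\mathrm{ad}_{\mathfrak{g}}h$ is nilpotent (the usual argument with left and right multiplications by $h$ on $\mathrm{End}(V)$, mildly sensitive to $\mathrm{p}(h)$). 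Applying the inductive hypothesis to the $\mathrm{ad}$-action of $\mathfrak{h}$ on the nonzero graded quotient $\mathfrak{g}/\mathfrak{h}$ produces a homogeneous $0\ne\bar{x}\in\mathfrak{g}/\mathfrak{h}$ with $[\mathfrak{h},x]\subseteq\mathfrak{h}$. If $\mathrm{p}(x)=\bar{0}$ then $\mathfrak{h}+\mathbb{F}x$ is a $\mathbb{Z}_2$-graded subalgebra strictly larger than $\mathfrak{h}$; if $\mathrm{p}(x)=\bar{1}$ then $[x,x]$ need not vanish, and one checks, using $[x,[x,x]]=0$ and $[\mathfrak{h},[x,x]]\subseteq\mathfrak{h}$ (the latter from $[\mathfrak{h},x]\subseteq\mathfrak{h}$ via the Jacobi identity), that $\mathfrak{h}+\mathbb{F}x+\mathbb{F}[x,x]$ is a $\mathbb{Z}_2$-graded subalgebra strictly larger than $\mathfrak{h}$. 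By maximality this forces equality, so $\mathfrak{h}$ is an ideal of $\mathfrak{g}$ of codimension at most $2$; the inductive hypothesis for $\mathfrak{h}$, together with the standard fact that an associative algebra which is an extension of a nilpotent ideal by a nilpotent generator is nilpotent, then yields $\mathfrak{A}^{m}=0$.

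The step I expect to be the main obstacle is this last induction --- checking that the Engel--Jacobson argument survives the replacement of the ordinary commutator by the super bracket. Its genuine content is the ungraded \cite[Theorem 1.3.1]{sf}, invoked above on the even part, and I take that as given. The new, super-specific points are two, and both are minor: first, every auxiliary object occurring in the induction --- the subalgebra $\mathfrak{h}$, the normalizer $\{x:[\mathfrak{h},x]\subseteq\mathfrak{h}\}$, and the output flag $(\mathfrak{A}^{\,m-i}V)_i$ --- must be $\mathbb{Z}_2$-graded, which is automatic here because $\mathrm{span}_{\mathbb{F}}S$ is a sub-superspace and hence $\mathfrak{A}$ is graded; second, when the element $x$ coming out of the maximal-subalgebra step is odd, $[x,x]=2x^{2}$ may be nonzero, so one must adjoin $\mathbb{F}[x,x]$ as well, and the codimension-one ideal of the classical statement becomes a codimension-at-most-$2$ ideal --- which is harmless for the induction. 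Once these observations are in place the lemma follows.
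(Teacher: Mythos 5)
Your overall route is the one the paper intends: the paper gives no argument for Lemma \ref{l1'''} beyond the remark that it follows from a slight modification of \cite[Theorem 1.3.1]{sf}, and your write-up --- reduce everything to nilpotency of the associative algebra generated by $S$, feed the even part into the classical Engel--Jacobson theorem, handle odd elements via $y^{2}=\frac{1}{2}[y,y]$, and run the Engel induction with a maximal graded subalgebra, accepting a codimension-at-most-$2$ step because of $[x,x]$ --- is a reasonable expansion of that remark. The auxiliary facts you invoke (ad-nilpotence of homogeneous nilpotent elements, gradedness of the generated associative algebra and of the flag $\mathfrak{A}^{\,m-i}V$, nilpotency of the algebra generated by a nilpotent subalgebra $B$ and a nilpotent $z$ with $[z,B]\subseteq B$) are all correct as used.

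The one genuine gap is the opening reduction ``we may assume the elements of $S$ are $\mathbb{Z}_2$-homogeneous,'' which you assert without argument and which your proof then uses essentially: you need $S\cap\mathrm{End}(V)_{\bar{0}}$ to span $(\mathrm{span}_{\mathbb{F}}S)_{\bar{0}}$ in order to quote the ungraded theorem, and later you need the elements entering the induction to be homogeneous nilpotent maps. The paper's definition of a Lie-super subset only requires $S$ to be bracket-closed and to span a sub-superspace, and in the only place the lemma is applied (the proof of Lemma \ref{l1''}) $S$ is a union of two graded subspaces, hence contains genuinely inhomogeneous nilpotent elements, and the conclusion is needed precisely for inhomogeneous sums $E_{0}+E_{1}$. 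The reduction is not free of charge: the homogeneous components of a nilpotent operator need not be nilpotent --- on $\mathbb{F}^{1|1}$ the operator with matrix $\left(\begin{smallmatrix}1&1\\-1&-1\end{smallmatrix}\right)$ squares to zero while its even component $\mathrm{diag}(1,-1)$ does not --- so the assertion that all homogeneous elements of $\mathrm{span}_{\mathbb{F}}S$ are nilpotent must be extracted from the closure of $S$ under the super bracket, which your argument never does for inhomogeneous elements (note also that $\mathrm{ad}\,s$ need not be nilpotent for such $s$, so the induction cannot simply be rerun with inhomogeneous $s\in S$). The gap is easy to close in the situation the paper actually uses: whenever the homogeneous elements of $S$ already span $\mathrm{span}_{\mathbb{F}}S$ (true when $S$ is a union of graded subspaces, as in Lemma \ref{l1''}), replace $S$ by its subset of homogeneous elements, which is again bracket-closed, consists of nilpotent maps, and has the same span; with that one observation added your proof covers everything the paper needs. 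For the lemma in the stated generality, however, you would additionally have to show that the hypotheses force the even components of elements of $S$ to act nilpotently, and your proposal does not address this point (which is also the point hidden in the paper's phrase ``slight modification'').
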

\begin{lemma}\label{l1''}
$W_{\mathbf{p},1}\subseteq \mathrm{nil}(W).$
\end{lemma}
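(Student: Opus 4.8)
The plan is to show that every element of $W_{\mathbf{p},1}$ acts nilpotently on $W=W(n,n+1)$. Since $W_{\mathbf{p},1}=\bigoplus_{i\geq 1}W_{\mathbf{p},[i]}$ and $W$ carries the principal $\mathbb{Z}$-grading $W=\bigoplus_{j\geq -2}W_{\mathbf{p},[j]}$, the key structural fact is that $\mathrm{ad}\,y$ for $y\in W_{\mathbf{p},[i]}$ with $i\geq 1$ raises the principal degree: $[\,W_{\mathbf{p},[i]},W_{\mathbf{p},[j]}\,]\subseteq W_{\mathbf{p},[i+j]}$. So for a homogeneous $y\in W_{\mathbf{p},[i]}$, $(\mathrm{ad}\,y)^k$ maps $W_{\mathbf{p},[j]}$ into $W_{\mathbf{p},[j+ki]}$; this by itself does not give nilpotence because $W$ is infinite-dimensional and has infinitely many positive graded pieces. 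The genuine point must therefore be that $\mathrm{ad}\,y$ is \emph{locally} nilpotent and moreover globally nilpotent on all of $W$ — which forces us to look more carefully at what degree-raising maps can do.

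First I would reduce to the homogeneous case: if each homogeneous component of $y\in W_{\mathbf{p},1}$ is ad-nilpotent, a short argument using the grading shows a sum of ad-nilpotent elements of strictly positive degree is ad-nilpotent (the lowest-degree output of any product of $N$ such operators has degree at least $N$, and on each fixed-degree slice only finitely many terms contribute). So fix $y\in W_{\mathbf{p},[i]}$, $i\geq 1$. Write $y=\sum_r f_r\partial_r$ with $f_r$ in the appropriate principal-degree component of $\mathcal{O}$. The cleanest route is: observe that the standard grading $W=\bigoplus_{j\geq -1}W_{\mathbf{s},[j]}$ is a \emph{coarsening-compatible} refinement in the sense that principal degree $\geq 1$ forces standard degree $\geq 1$ except possibly for the single generator $\partial_{2n+1}$ (which has principal degree $-2$, standard degree $-1$) and $x_{2n+1}\partial_{2n+1}$-type terms — but those have principal degree $0$, hence do not occur in $W_{\mathbf{p},1}$. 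Thus every $y\in W_{\mathbf{p},1}$ actually lies in $W_{\mathbf{s},1}=\bigoplus_{j\geq 1}W_{\mathbf{s},[j]}$, the standard filtration piece. Now I would invoke the classical fact (true for any graded Cartan-type algebra with finitely many generators in the divided-power setting) that $W_{\mathbf{s},1}\subseteq\mathrm{nil}(W)$: an element of standard degree $\geq 1$ raises standard degree, and because $\mathcal{O}$ has a \emph{finite} number of variables $x_1,\dots,x_{2n+1}$, iterating $\mathrm{ad}\,y$ eventually annihilates any given standard-homogeneous element (each application strictly increases the total degree $|\alpha|+|u|$ of every monomial appearing, and after enough steps the $\Lambda$-part forces the coefficient to vanish once $|u|$ would exceed $n+1$, while the commutator structure in the divided-power algebra together with degree growth kills the rest). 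A uniform bound across all of $W$ then follows by the same degree-shift bookkeeping as in the reduction step.

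The main obstacle I anticipate is making the last assertion — that $\mathrm{ad}\,y$ is \emph{globally} (uniformly) nilpotent on the infinite-dimensional $W$, not merely locally nilpotent — genuinely airtight, since naive degree-raising only gives local nilpotence. The resolution should exploit that $y$ has bounded "width": writing $y=\sum_r f_r\partial_r$, each $f_r$ is a fixed polynomial, so $\mathrm{ad}\,y$ applied to $x^{(\alpha)}x^u\partial_s$ produces terms whose divided-power exponents change by a bounded amount while the total principal degree goes up by exactly $i\geq 1$; after $k$ steps every surviving monomial has had its $\Lambda$-degree $|u|$ or its contracted index structure pushed past the hard ceiling $|u|\leq n+1$, OR its derivative order exhausted — I expect the clean way to package this is to show $(\mathrm{ad}\,y)^{n+2}$ already lands in a region that is then killed by one more application for degree reasons, giving an explicit nilpotency index depending only on $n$. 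I would double-check this bound against the easy special case $y=x_1\partial_1$ (semisimple, \emph{not} nilpotent — but it has principal degree $0$, so it is correctly excluded) and against $y=x_1^2\partial_1\in W_{\mathbf{p},[1]}$ to confirm the mechanism. Once the homogeneous uniform bound is in hand, Lemma statement follows immediately.
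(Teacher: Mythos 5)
There is a genuine gap, and it sits at the pivotal step of your argument: the claim that every $y\in W_{\mathbf{p},1}$ lies in $W_{\mathbf{s},1}$ is false. You checked for exceptions only among $\partial_{2n+1}$ and ``$x_{2n+1}\partial_{2n+1}$-type'' terms, but you missed the elements $x_{2n+1}\partial_{i}$ with $i\in\overline{1,2n}$: since $\|\langle 2n+1\rangle\|=2$, such an element has principal degree $2-1=1$, hence lies in $W_{\mathbf{p},[1]}\subseteq W_{\mathbf{p},1}$, while its standard degree is $1-1=0$, so it is \emph{not} in $W_{\mathbf{s},1}$. These elements are precisely what makes the lemma nontrivial; indeed $W_{\mathbf{p},1}=\mathrm{span}_{\mathbb{F}}\{x_{2n+1}\partial_{i}\mid i\in\overline{1,2n}\}+ (W_{\mathbf{p},1}\cap W_{\mathbf{s},1})$, and the paper's proof is organized around exactly this decomposition: it writes $E=E_{0}+E_{1}$ with $E_{0}$ in the span of the $x_{2n+1}\partial_{i}$ and $E_{1}$ in a truncated piece $W_{\mathbf{s},1}(n,n+1;\underline{t})$, verifies by direct computation that the $x_{2n+1}\partial_{i}$ are ad-nilpotent, quotes a known result for the truncated standard-filtration part, and then concludes that the \emph{sum} $E$ is ad-nilpotent via an Engel-type lemma (Lemma \ref{l1'''}) applied to a finite-dimensional Lie-super subset containing $\mathrm{ad}E_{0}$, $\mathrm{ad}E_{1}$ and $\mathrm{ad}[E_{0},E_{1}]$. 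Your route, which bypasses all of this by reducing to $W_{\mathbf{s},1}$, therefore cannot work as stated.

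Two further steps in your sketch are also not sound as written. First, the reduction ``a sum of ad-nilpotent homogeneous elements of positive principal degree is ad-nilpotent'' does not follow from degree bookkeeping: in the infinite-dimensional $W$ the degrees are unbounded above, so degree-raising never forces vanishing, and cross terms such as $(\mathrm{ad}y_{1})(\mathrm{ad}y_{2})(\mathrm{ad}y_{1})\cdots$ are not controlled by the nilpotency of the individual summands; this mixing problem is exactly what the paper's Engel-type lemma is invoked to solve. Second, your argument that $W_{\mathbf{s},1}\subseteq\mathrm{nil}(W)$ by ``degree growth kills the rest'' is a heuristic, not a proof: the $\Lambda$-ceiling only controls the exterior variables, and in the divided-power directions the genuine mechanism is the vanishing of binomial coefficients modulo $p$, which is why the paper passes to a truncation $W(n,n+1;\underline{t})$ and cites the corresponding result for it rather than arguing by degree growth on the full $\mathcal{O}$.
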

\begin{proof}
There exists a sub-superspace $V\subseteq W_{\mathbf{s},1}$ such that
\begin{eqnarray*}
W_{\mathbf{p},1}=\mathrm{span}_{\mathbb{F}}\{x_{2n+1}\partial_{i}\mid
i\in \overline{1,2n}\}+V.
\end{eqnarray*}
 For $E\in W_{\mathbf{p},1},$ write $E=E_{0}+E_{1},$ where
$E_{0}\in \mathrm{span}_{\mathbb{F}}\{x_{2n+1}\partial_{i}\mid i\in
\overline{1,2n}\},$  $E_{1}\in V.$ Put $E_{2}:=[E_{0},E_{1}].$ Then
there exists  an $n$-tuple $\underline{t} $ of positive integers   such that $E_{1}, E_{2}\in
W_{\mathbf{s},1}(n,n+1;\underline{t}):=W(n,n+1;\underline{t})\cap W_{\mathbf{s},1} $ [see \cite{lzw} for a
definition of $W(n,n+1;\underline{t})$].
Note that
\begin{eqnarray*}S:=\mathrm{span}_{\mathbb{F}}\{\mathrm{ad}(x_{2n+1}\partial_{i})\mid
i\in \overline{1,2n}\}\cup \mathrm{ad}W_{\mathbf{s},1}(n,n+1;\underline{t})
\end{eqnarray*}
  is a Lie-super subset of  the general linear Lie superalgebra $\mathfrak{gl}(W).$  A direct computation shows
that $\mathrm{span}_{\mathbb{F}}\{ x_{2n+1}\partial_{i}\mid
i\in \overline{1,2n}\}\subseteq\mathrm{nil}(W).$ By virtue of \cite[Theorem 2.5]{zl5},
we have $ W_{\mathbf{s},1}(n,n+1;\underline{t})\subseteq
\mathrm{nil}(W).$ Then Lemma \ref{l1'''} ensures that   $E\in
\mathrm{nil}(W).$
\end{proof}
\begin{lemma}\label{l1}
$\mathrm{(1)}$ $KO _{1}\subseteq \mathrm{nil}(KO ).$

$\mathrm{(2)}$ Suppose $y=y_{[i]}+y_{i+1}\in \mathrm{nil}(KO _{i}),$
where $y_{[i]}\in KO _{[i]}$ and $y_{i+1}\in KO _{i+1}.$ Then
$y_{[i]}\in \mathrm{nil}(KO _{[i]}).$

$\mathrm{(3)}$ Suppose  $y=y_{[-1]}+y_{0}\in \mathrm{nil}(KO
_{\bar{0}}),$ where   $y_{[-1]}\in KO _{[-1]}\cap KO _{\bar{0}}$ and
$y_{0}\in KO _{0}\cap KO _{\bar{0}}.$ Then $y_{[-1]}=0.$

$\mathrm{(4)}$ $\mathrm{Nil}(KO _{\bar{0}})=\mathrm{Nil}(KO
_{[0]}\cap KO _{\bar{0}})+KO _{1}\cap KO _{\bar{0}}.$
\end{lemma}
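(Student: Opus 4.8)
The plan is to prove Lemma 1.4(4), namely that $\mathrm{Nil}(KO_{\bar0})=\mathrm{Nil}(KO_{[0]}\cap KO_{\bar0})+KO_1\cap KO_{\bar0}$, by establishing the two inclusions separately. Throughout one should keep in mind the grading $KO_{\bar0}=\bigl(KO_{[-2]}\bigr)\oplus\bigl(KO_{[-1]}\cap KO_{\bar0}\bigr)\oplus\bigl(KO_{[0]}\cap KO_{\bar0}\bigr)\oplus\bigl(KO_1\cap KO_{\bar0}\bigr)$, recalling from \eqref{e2} that $KO_{[-2]}=\mathbb{F}D_{KO}(1)$ is spanned by $\mathrm{ad}$-nilpotent elements of $KO_{\bar0}$? — actually one must check whether $D_{KO}(1)$ is $\mathrm{ad}$-nilpotent; since $[D_{KO}(1),D_{KO}(a)]=D_{KO}(D_{KO}(1)(a)-2\partial_{2n+1}(a))$ and $D_{KO}(1)$ acts with negative degree $-2$, $\mathrm{ad}D_{KO}(1)$ lowers principal degree by $2$ but $KO$ is infinite-dimensional in the positive direction, so it need not be nilpotent. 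This is the first subtle point to pin down.

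For the inclusion $\supseteq$: by Lemma 1.4(1) we have $KO_1\subseteq\mathrm{nil}(KO)$, hence $KO_1\cap KO_{\bar0}\subseteq\mathrm{nil}(KO_{\bar0})\subseteq\mathrm{Nil}(KO_{\bar0})$, and trivially $\mathrm{Nil}(KO_{[0]}\cap KO_{\bar0})\subseteq\mathrm{Nil}(KO_{\bar0})$ since $KO_{[0]}\cap KO_{\bar0}\subseteq KO_{\bar0}$ and generators that are $\mathrm{ad}$-nilpotent on $KO$ stay $\mathrm{ad}$-nilpotent. Since the right-hand side is already a subalgebra (one should check $[KO_{[0]}\cap KO_{\bar0},KO_1\cap KO_{\bar0}]\subseteq KO_1\cap KO_{\bar0}$, which is immediate from the grading), this inclusion is routine.

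The substantive direction is $\subseteq$. Here one uses Lemma 1.4(3): any $y\in\mathrm{nil}(KO_{\bar0})$, when decomposed along the principal filtration starting at its lowest degree, cannot have a nonzero component in degree $-2$ (else $\mathrm{ad}y$ would fail to be nilpotent — this needs the observation above that $\mathrm{ad}D_{KO}(1)$ is not nilpotent, and more generally that a lowest term in $KO_{[-2]}$ obstructs nilpotency), and by part (3) cannot have a nonzero component in degree $-1$; thus $\mathrm{nil}(KO_{\bar0})\subseteq KO_{\mathbf p,0}\cap KO_{\bar0}=(KO_{[0]}\cap KO_{\bar0})\oplus(KO_1\cap KO_{\bar0})$. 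Writing $y=y_{[0]}+y_1$ accordingly, part (2) (with $i=0$) gives $y_{[0]}\in\mathrm{nil}(KO_{[0]}\cap KO_{\bar0})$, so $y=y_{[0]}+y_1$ lies in $\mathrm{Nil}(KO_{[0]}\cap KO_{\bar0})+KO_1\cap KO_{\bar0}$. Therefore $\mathrm{nil}(KO_{\bar0})$ is contained in that sum; since the sum is a subalgebra, so is the subalgebra it generates, giving $\mathrm{Nil}(KO_{\bar0})\subseteq\mathrm{Nil}(KO_{[0]}\cap KO_{\bar0})+KO_1\cap KO_{\bar0}$.

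I expect the main obstacle to be the careful bookkeeping that $\mathrm{nil}(KO_{\bar0})$ contains \emph{no} element with nonzero lowest component in degrees $-2$ or $-1$: part (3) handles a potential degree $-1$ lowest term directly, but one must argue that an element whose lowest nonzero principal component sits in $KO_{[-2]}$ cannot be $\mathrm{ad}$-nilpotent. The cleanest route is to note that for such $y$, acting repeatedly by $\mathrm{ad}y$ on a suitably chosen element of high principal degree never annihilates it, because the "leading'' part $\mathrm{ad}D_{KO}(1)$ strictly decreases principal degree by $2$ and $KO$ extends infinitely upward; equivalently, one reduces to the known non-nilpotence of the corresponding element in a smaller subalgebra. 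Once that is in place, the rest is a direct application of parts (1), (2), (3) together with the principal grading decomposition.
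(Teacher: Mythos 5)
Your proposal treats only part (4), taking (1)--(3) as given, whereas the statement (and the paper's proof) covers all four parts: (1) follows from $KO_{1}\subseteq W_{\mathbf{p},1}$ together with Lemma \ref{l1''}, (2) is proved as in \cite[Lemma 2.7]{zl5}, and (3) rests on the explicit computation $(\mathrm{ad}\,y_{[-1]})^{k}(D_{KO}(x^{((k+1)\varepsilon_{j})}))=a_{j}^{k}D_{KO}(x_{j})\neq 0$. So as a proof of the lemma your text is incomplete. As a proof of (4) alone, your two-inclusion skeleton is essentially the paper's deduction (the paper cites (1) and (3); your additional use of (2) with $i=0$ to place the degree-zero component in $\mathrm{nil}(KO_{[0]}\cap KO_{\bar{0}})$ is in fact the right way to make that step explicit, as is your check that the right-hand side is a subalgebra).

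The genuine mathematical problem is your handling of degree $-2$. The map $D_{KO}$ reverses parity (each summand of $D_{KO}(a)$ has parity $\mathrm{p}(a)+\bar{1}$), so $D_{KO}(1)=-2\partial_{2n+1}$ is odd and $KO_{[-2]}\subseteq KO_{\bar{1}}$; consequently $KO_{\bar{0}}=(KO_{[-1]}\cap KO_{\bar{0}})\oplus(KO_{0}\cap KO_{\bar{0}})$, and the decomposition you start from, with a $KO_{[-2]}$ summand inside $KO_{\bar{0}}$, is wrong. Because of this, what you single out as ``the main obstacle'' --- excluding an element of $\mathrm{nil}(KO_{\bar{0}})$ whose lowest term lies in $KO_{[-2]}$ --- is vacuous, and part (3) as stated already applies to every element of $\mathrm{nil}(KO_{\bar{0}})$. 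Worse, the argument you sketch for that obstacle would fail even if it were needed: lowering the principal degree by $2$ on an algebra unbounded above does not preclude nilpotency, and indeed $(\mathrm{ad}\,D_{KO}(1))^{2}=0$ by (\ref{liuee1}) since $\partial_{2n+1}^{2}=0$, so $D_{KO}(1)$ itself refutes the claimed obstruction. The correct repair is the parity observation above; with it, your argument for (4) goes through and coincides with the paper's intended proof.
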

\begin{proof}
(1) Since $KO _{1}\subseteq W_{\mathbf{p},1},$ by virtue of Lemma
\ref{l1''} we get $KO _{1}\subseteq \mathrm{nil}(KO ).$

(2) Similar to \cite[Lemma 2.7]{zl5}, one may prove (2).

(3) Suppose $y_{[-1]}=\sum_{i=1}^{n}a_{i}D_{KO}(x_{i'}),$ where
$a_{i}\in \mathbb{F}.$  Note that
$D_{KO}(x^{((k+1)\varepsilon_{j})})\in KO $ for $k\in \mathbb{N}.$
If $a_{j}\neq0$ for some $j\in \overline{1,n},$ then
 $y_{[-1]}$ is not  $\mathrm{ad}$-nilpotent, since
$(\mathrm{ad}y_{[-1]})^{k}(D_{KO}(x^{((k+1)\varepsilon_{j})}))=a_{j}^{k}D_{KO}(x_{j})\neq0$. This
contradicts (2). Therefore, $a_{j}=0$ for all $j\in
\overline{1,n},$ that is, $y_{[-1]}=0.$

(4) It follows from (1) and (3).
\end{proof}
\begin{lemma}\label{l3}
If $i\neq j'\in \overline{1,2n},$ then $(T_{H}(x_{i}x_{j}))^{2p}=0.$
\end{lemma}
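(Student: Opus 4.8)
The plan is to reduce this to a fact about two $p$-nilpotent first-order operators on $\mathcal{O}$ that commute up to sign. First I would compute $T_{H}(x_{i}x_{j})$ directly from the definition. If $i=j$ the statement is trivial: $T_{H}(x_{i}^{2})=0$ when $i>n$, while for $i\leq n$ one gets $T_{H}(x_{i}^{2})=2x_{i}\partial_{i'}$ with $(x_{i}\partial_{i'})^{2}=x_{i}^{2}\partial_{i'}^{2}=0$, since $\partial_{i'}$ is then an odd derivation. So assume $i\neq j$; together with $i\neq j'$ this makes $i,j,i',j'$ pairwise distinct. Since $\partial_{k'}(x_{i}x_{j})$ is supported on $k'\in\{i,j\}$, i.e.\ on $k\in\{i',j'\}$, one obtains $T_{H}(x_{i}x_{j})=c_{1}A+c_{2}B$, where $A:=x_{j}\partial_{i'}$, $B:=x_{i}\partial_{j'}$ and $c_{1},c_{2}\in\{\pm1\}$.

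Next I would record two properties of $A$ and $B$, viewed as operators on $\mathcal{O}$. (i) They super-commute in $W$: in the $W$-bracket $[A,B]$ the only possible contributions involve $\partial_{i'}(x_{i})=\delta_{i'i}$ and $\partial_{j'}(x_{j})=\delta_{j'j}$, both of which vanish; hence $AB=(-1)^{\theta}BA$ for a parity $\theta$, and consequently $A^{2}$ and $B^{2}$ commute as operators in either sign case (a one-line check: $A^{2}B^{2}=\pm(BA)^{2}=B^{2}A^{2}$). (ii) Each of $A,B$ is $p$-nilpotent: because $i'\neq j$ forces $\partial_{i'}(x_{j})=0$, we have $A^{k}=\pm\,x_{j}^{\,k}\partial_{i'}^{\,k}$, and this vanishes already for $k=2$ if $x_{j}$ is exterior ($x_{j}^{2}=0$) or if $\partial_{i'}$ is exterior ($\partial_{i'}^{2}=0$), and for $k=p$ if both $x_{j}$ and $x_{i'}$ are of divided-power type (then $x_{j}^{\,p}=p!\,x^{(p\varepsilon_{j})}=0$ in characteristic $p$); so $A^{p}=0$, and likewise $B^{p}=0$.

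Finally I would assemble these. If $AB=BA$, expand $(c_{1}A+c_{2}B)^{2p}=\sum_{k}\binom{2p}{k}c_{1}^{k}c_{2}^{2p-k}A^{k}B^{2p-k}$ and observe that every summand dies, since $k\geq p$ kills $A^{k}$ while $k\leq p$ kills $B^{2p-k}$. If $AB=-BA$, then $(c_{1}A+c_{2}B)^{2}=c_{1}^{2}A^{2}+c_{2}^{2}B^{2}$, and since $A^{2}$ and $B^{2}$ commute, $\big((c_{1}A+c_{2}B)^{2}\big)^{p}=\sum_{k}\binom{p}{k}c_{1}^{2k}c_{2}^{2(p-k)}A^{2k}B^{2(p-k)}=0$ for the same reason, because for every $k$ one has $2k\geq p$ or $2(p-k)\geq p$. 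Either way $(T_{H}(x_{i}x_{j}))^{2p}=0$. The only genuine nuisance is the superalgebra sign bookkeeping in the first two steps (the precise values of $c_{1},c_{2}$ and of the exponent $\theta$, and the signs when commuting $\partial$'s past $x$'s); the structural input—$p$-nilpotency together with commutation up to sign—is what actually forces the power to vanish, and in fact the argument yields the sharper bound $(T_{H}(x_{i}x_{j}))^{p+1}=0$.
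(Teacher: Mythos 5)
Your proposal is correct and follows essentially the same route as the paper: decompose $T_{H}(x_{i}x_{j})$ into the two summands $x_{j}\partial_{i'}$ and $x_{i}\partial_{j'}$ (formula (\ref{e3})), note that each has vanishing $p$-th power and that the two (super)commute, and conclude by expanding the $2p$-th power. Your version merely spells out the sign bookkeeping and case analysis that the paper leaves implicit, and the sharper bound you mention is a harmless bonus.
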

\begin{proof}
Note that
\begin{eqnarray}\label{e3}
T_{H}(x_{i}x_{j})=(-1)^{\mu(i)+\mu(i)\mu(j)}x_{j}\partial_{i'}
+(-1)^{\mu(j)}x_{i}\partial_{j'},
\end{eqnarray}
$(x_{j}\partial_{i'})^{p}=(x_{i}\partial_{j'})^{p}=0$ and
$[x_{j}\partial_{i'}, x_{i}\partial_{j'}]=0$ for all $i\neq j'\in
\overline{1,2n}.$ In combination with (\ref{e3}), we have
$(T_{H}(x_{i}x_{j}))^{2p}=0.$
\end{proof}
\begin{lemma}\label{l2}
For $i, j \in \overline{1,2n}, $ the following statements hold.

$\mathrm{(1)}$ If $f\in \mathcal {O},$ $D_{KO}(f)\in KO _{[0]}$ and
$\partial_{2n+1}(f)\neq0,$ then $D_{KO}(f)\notin \mathrm{nil}(KO
_{[0]}).$

$\mathrm{(2)}$ Suppose $i\neq j' $ and $a_{i}\in \mathbb{F} $ for all $i\in \overline{1,n}.$ Then
\begin{eqnarray*}
&& D_{KO}(x_{i}x_{j})\in \mathrm{nil}(KO _{[0]}),
\quad\qquad\quad\;\; \sum_{i=1}^{n}a_{i}D_{KO}(x_{i}x_{i'})\notin
\mathrm{nil}(KO _{[0]})\; \mbox{or is } 0,
\\
&& D_{KO}(x_{i}x_{i'})\notin \mathrm{Nil}( KO _{[0]}),
\quad\qquad\quad\; D_{KO}(x_{2n+1})\notin \mathrm{Nil}(KO _{[0]}),
\\
&& D_{KO}(x_{i}x_{i'}-x_{j}x_{j'})\in \mathrm{Nil}( KO _{[0]}).
\end{eqnarray*}
\end{lemma}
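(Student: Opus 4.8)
The plan is to verify each of the six assertions by direct computation using the commutator formula (\ref{liuee1}) and the explicit description (\ref{e2}) of $KO_{[0]}$, together with the auxiliary nilpotency facts from Lemmas \ref{l1'} and \ref{l3}. For part (1), I would use formula (\ref{liuee1}): when $D_{KO}(f)\in KO_{[0]}$ with $\partial_{2n+1}(f)\neq 0$, the element $f$ must have a summand of the form $c\,x_{2n+1}$ with $c\neq 0$ (since $KO_{[0]}$ is spanned by $D_{KO}(x_{2n+1})$ and the $D_{KO}(x_ix_j)$). Then $D_{KO}(x_{2n+1})$ acts on $KO_{[0]}$ essentially as the grading operator $E$ up to a nonzero scalar; more precisely, testing on a suitable basis element $D_{KO}(x^{(\alpha)})$ of $KO$ one finds that $\mathrm{ad}\,D_{KO}(f)$ has a nonzero eigenvalue on $KO$, so it cannot be nilpotent. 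The key point is that the $x_{2n+1}$-component contributes a semisimple piece that the remaining (strictly triangulable) part cannot cancel.

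For part (2), the statements about membership in $\mathrm{nil}(KO_{[0]})$ reduce by Lemma \ref{l1'} to statements about $T_H(x_ix_j)$ acting on $\mathcal{O}$: the identity $[D_{KO}(x_ix_j),D_{KO}(b)]=D_{KO}(T_H(x_ix_j)(b))$ transfers ad-nilpotency of $D_{KO}(x_ix_j)$ on $KO$ to nilpotency of the operator $T_H(x_ix_j)$ on $\mathcal{O}$. Thus $D_{KO}(x_ix_j)\in\mathrm{nil}(KO_{[0]})$ for $i\neq j'$ follows from Lemma \ref{l3} (which gives $(T_H(x_ix_j))^{2p}=0$), while for $\sum a_iD_{KO}(x_ix_{i'})$ one computes $T_H(x_ix_{i'})$ from the definition of $T_H$ and checks it acts diagonally (it is, up to scalars, a sum of operators $x_i\partial_i$), hence is non-nilpotent unless all $a_i=0$. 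The assertions about $\mathrm{Nil}(KO_{[0]})$ are the subtler ones: to show $D_{KO}(x_ix_{i'})\notin\mathrm{Nil}(KO_{[0]})$ and $D_{KO}(x_{2n+1})\notin\mathrm{Nil}(KO_{[0]})$, I would first identify $\mathrm{nil}(KO_{[0]})$ explicitly — by parts (1) and the nil/non-nil dichotomy just established, $\mathrm{span}_{\mathbb{F}}\mathrm{nil}(KO_{[0]})$ consists of the $D_{KO}(x_ix_j)$ with $i\neq j'$ together with the traceless combinations $D_{KO}(x_ix_{i'}-x_jx_{j'})$ — and then observe that this spanning set is closed enough under brackets that $\mathrm{Nil}(KO_{[0]})$ lies in the codimension-one (resp. -two) subspace excluding the "trace" direction $\sum D_{KO}(x_ix_{i'})$ and the $D_{KO}(x_{2n+1})$ direction. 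Concretely, $KO_{[0]}$ is (a central extension of) $\mathfrak{gl}$-type, the ad-nilpotent part generates the $\mathfrak{sl}$-type ideal, and neither the trace element nor the grading element $D_{KO}(x_{2n+1})$ can be recovered from it.

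For the last statement, $D_{KO}(x_ix_{i'}-x_jx_{j'})\in\mathrm{Nil}(KO_{[0]})$, I would exhibit it as a bracket of ad-nilpotent elements: using (\ref{liuee1}) and (\ref{e3}) one checks that $[D_{KO}(x_ix_{j'}),D_{KO}(x_jx_{i'})]$, with $i\neq j$ (so that both factors have the form $D_{KO}(x_kx_l)$ with $k\neq l'$ and hence lie in $\mathrm{nil}(KO_{[0]})$ by Lemma \ref{l3}), equals a nonzero scalar multiple of $D_{KO}(x_ix_{i'}-x_jx_{j'})$ modulo the central term; rescaling gives the claim. The main obstacle I anticipate is the precise bookkeeping for the $\mathrm{Nil}$-statements: one must rule out that iterated brackets of the (many) ad-nilpotent generators ever produce a component along $D_{KO}(x_{2n+1})$ or along the trace element $\sum_i D_{KO}(x_ix_{i'})$. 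The cleanest way around this is to produce an explicit linear functional on $KO_{[0]}$ — a "trace" — that vanishes on all of $\mathrm{nil}(KO_{[0]})$ (hence on $\mathrm{Nil}(KO_{[0]})$, since it also vanishes on brackets) but is nonzero on $D_{KO}(x_ix_{i'})$ and on $D_{KO}(x_{2n+1})$; once such a functional is in hand, all four $\mathrm{Nil}$-related claims follow immediately, and the sign factors $(-1)^{\mu(i)\mathrm{p}(a)}$ in the definitions of $T_H$ and $D_{KO}$ become a routine (if tedious) verification rather than a conceptual difficulty.
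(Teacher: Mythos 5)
Your overall route is essentially the paper's: nil-membership of $D_{KO}(x_ix_j)$ ($i\neq j'$) via Lemma \ref{l1'} plus Lemma \ref{l3} and the injectivity of $D_{KO}$; non-nilpotency of $\sum a_iD_{KO}(x_ix_{i'})$ by exhibiting a nonzero eigenvalue (the paper brackets with $D_{KO}(x_jx_{2n+1})$ inside $KO$, you diagonalize $T_H$ on $\mathcal O$ --- equivalent); and the last bullet as a bracket of two ad-nilpotent elements (the paper uses $[D_{KO}(x_ix_j),D_{KO}(x_{i'}x_{j'})]=-D_{KO}(x_ix_{i'}-x_jx_{j'})$; by Lemma \ref{l1'} the bracket is exactly $D_{KO}(T_H(\cdot)(\cdot))$, so there is no ``central term'' to discard). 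Your trace functional for the $\mathrm{Nil}$-claims is the dual form of the paper's argument: its kernel in $KO_{[0]}$ is precisely $SHO'(n,n)\cap KO_{[0]}$, and the paper argues $\mathrm{nil}(KO_{[0]})\subseteq SHO'(n,n)$, hence $\mathrm{Nil}(KO_{[0]})\subseteq SHO'(n,n)$ since $SHO'(n,n)$ is a subalgebra, with $D_{KO}(x_ix_{i'})$ and $D_{KO}(x_{2n+1})$ visibly outside it; using the known subalgebra spares the bracket-vanishing verification your functional requires, though that verification is doable.

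Two points need repair. First, your justification of (1) is not valid as stated: writing $f=ax_{2n+1}+f_0$ with $f_0\in\mathcal O(n,n)$ of degree $2$, the remaining part $D_{KO}(f_0)$ is in general \emph{not} strictly triangulable (take $f_0=x_1x_{1'}$), and a sum of a semisimple and a nilpotent operator can be nilpotent when they fail to commute; the correct (and the paper's) argument is simply to test on $D_{KO}(1)$: the $f_0$-part annihilates $D_{KO}(1)$, so (\ref{liuee1}) gives $[D_{KO}(f),D_{KO}(1)]=2aD_{KO}(1)$, a genuine nonzero eigenvalue of the full $\mathrm{ad}\,D_{KO}(f)$. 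Second --- the real gap --- your identification of $\mathrm{span}_{\mathbb F}\,\mathrm{nil}(KO_{[0]})$, equivalently the assertion that your functional vanishes on all of $\mathrm{nil}(KO_{[0]})$, does not follow from the two facts you established: those concern only elements of the special forms $D_{KO}(x_ix_j)$ and $\sum a_iD_{KO}(x_ix_{i'})$, and they do not exclude an ad-nilpotent $D_{KO}(f)$ whose $f$ mixes a nonzero ``trace'' component $\sum a_ix_ix_{i'}$ with off-diagonal terms. What is needed is: once part (1) kills the $x_{2n+1}$-component, so that $D_{KO}(f)=T_H(f)$ with $f\in\mathcal O(n,n)$ of degree $2$, ad-nilpotency must force $\Delta(f)=0$, i.e. $T_H(f)\in SHO'(n,n)$. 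This is exactly the containment $\mathrm{nil}(KO_{[0]})\subseteq SHO'(n,n)$ on which the paper's proof of the $\mathrm{Nil}$-statements rests, and in your write-up it must be proved, not extracted from the ``dichotomy'' (for instance via the restriction of $T_H(f)$ to the degree-one component and its periplectic block structure). Without it the claim $D_{KO}(x_ix_{i'})\notin\mathrm{Nil}(KO_{[0]})$ is not secured; note that $D_{KO}(x_{2n+1})\notin\mathrm{Nil}(KO_{[0]})$ already follows from part (1) together with the fact (Lemma \ref{l1'}) that $\{D_{KO}(g)\mid g\in\mathcal O(n,n),\ g \mbox{ of standard degree } 2\}$ is closed under brackets.
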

\begin{proof}

(1) Suppose  $f\in \mathcal {O},$ $D_{KO}(f)\in KO _{[0]}$ and
$\partial_{2n+1}(f)\neq0.$ Then there exist $0\neq a\in \mathbb{F}$
and  $f_{0}\in \mathcal {O}(n,n)$ such that $f=ax_{2n+1}+f_{0}.$
Since
\begin{eqnarray*}
[D_{KO}(ax_{2n+1}+f_{0}), D_{KO}(1)]=2aD_{KO}(1),
\end{eqnarray*}
we have $D_{KO}(f)\notin \mathrm{nil}(KO _{[0]}).$

(2) Applying Lemma \ref{l1'} we obtain by induction on $k$ that
$$(\mathrm{ad}D_{KO}(x_{i}x_{j}))^{k}(D_{KO}(f))=D_{KO}(T_{H}(x_{i}x_{j})^{k}(f))\;\mbox{for all} \; k\in \mathbb{N},$$
where $f\in \mathcal {O}.$  Since $\mathrm{Ker}(D_{KO})=0,$ one sees
that $D_{KO}(x_{i}x_{j})$ is $\mathrm{ad}$-nilpotent if and only if
$T_{H}(x_{i}x_{j})$ is a nilpotent transformation of $\mathcal {O}.$
Then by Lemma \ref{l3}, we know that
\begin{eqnarray*}
D_{KO}(x_{i}x_{j})\in \mathrm{nil}(KO _{[0]})\quad \mbox{for all}\;
i\neq j'\in \overline{1,2n}.
\end{eqnarray*}
Note that
\begin{eqnarray*}
\Big[\sum_{i=1}^{n}a_{i}D_{KO}(x_{i}x_{i'}), D_{KO}(x_{j}x_{2n+1})\Big]=-a_{j}D_{KO}(x_{j}x_{2n+1}),
\; j\in \overline{1,n}.
\end{eqnarray*}
If $\sum_{i=1}^{n}a_{i}D_{KO}(x_{i}x_{i'})\neq 0$ then
\begin{eqnarray*}
 \sum_{i=1}^{n}a_{i}D_{KO}(x_{i}x_{i'})\notin \mathrm{nil}(KO _{[0]}).
\end{eqnarray*}
It follows from (1)  that $\mathrm{nil}(KO _{[0]})\subseteq
SHO'(n,n). $ Therefore, $\mathrm{Nil}(KO _{[0]}) \subseteq SHO'(n,n).$ But $D_{KO}(x_{i}x_{i'}),$ $D_{KO}(x_{2n+1})\notin
SHO'(n,n),$ thus
$$D_{KO}(x_{i}x_{i'}), D_{KO}(x_{2n+1})\notin
\mathrm{Nil}(KO _{[0]}).$$ By virtue of the fact
that
\begin{eqnarray*}
[D_{KO}(x_{i}x_{j}), D_{KO}(x_{i'}x_{j'})]
=-D_{KO}(x_{i}x_{i'}-x_{j}x_{j'}),
\end{eqnarray*}
we have $ D_{KO}(x_{i}x_{i'}-x_{j}x_{j'})\in \mathrm{Nil}( KO
_{[0]}). $
\end{proof}
\section{Invariant subalgebras}
Let
\begin{eqnarray*}
&&\mathfrak{T}:=\mathrm{Nor}_{KO _{\bar{0}}}(\mathrm{Nil}(KO _{\bar{0}})),\\
&& \mathfrak{Q}:= \{y\in KO _{\bar{1}}\mid [y,KO _{\bar{1}}]\subseteq T\},
\\
&&\mathfrak{M}:=\{y\in KO _{\bar{1}}\mid [y,\mathfrak{Q}]\subseteq\mathrm{Nil}(KO
_{\bar{0}})\}.
\end{eqnarray*}
It is easy to see that $\mathfrak{T}$ is an invariant subspace under the
automorphisms  of $KO$ and so are $\mathfrak{Q}$ and $\mathfrak{M}$.
\begin{proposition}\label{p12}
$\mathfrak{T}=KO _{0}\cap KO _{\bar{0}}.$ In particular, $KO _{0}\cap KO
_{\bar{0}}$ is an invariant subalgebra of $KO $.
\end{proposition}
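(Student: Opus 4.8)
The plan is to establish the two inclusions $KO_0\cap KO_{\bar 0}\subseteq\mathfrak{T}$ and $\mathfrak{T}\subseteq KO_0\cap KO_{\bar 0}$ separately; once they are in hand the remaining assertion is immediate, since $\mathfrak{T}$ is already invariant under the automorphism group of $KO$ and $KO_0\cap KO_{\bar 0}$ is a graded subalgebra. Throughout I use the decomposition $\mathrm{Nil}(KO_{\bar 0})=\mathrm{Nil}(KO_{[0]}\cap KO_{\bar 0})+KO_1\cap KO_{\bar 0}$ of Lemma \ref{l1}(4): its first summand sits in the principal degree $0$ component $KO_{[0]}$ (a subalgebra), while the second lives in degrees $\geq 1$, so $\mathrm{Nil}(KO_{\bar 0})\cap KO_{[0]}=\mathrm{Nil}(KO_{[0]}\cap KO_{\bar 0})$, and by Lemma \ref{l2} the latter lies in $SHO'(n,n)$. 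Also, since $D_{KO}$ reverses parity one has $KO_{[-2]}\cap KO_{\bar 0}=0$, so every $y\in\mathfrak{T}\subseteq KO_{\bar 0}$ has a unique expansion $y=y_{[-1]}+y_{[0]}+y_1$ with $y_{[-1]}=\sum_{i=1}^na_iD_{KO}(x_{i'})\in KO_{[-1]}\cap KO_{\bar 0}$ (cf.\ Lemma \ref{l1}(3)), $y_{[0]}\in KO_{[0]}\cap KO_{\bar 0}$ and $y_1\in KO_1\cap KO_{\bar 0}$; the whole point is to prove that $y_{[-1]}=0$ for every such $y$.

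For the inclusion $KO_0\cap KO_{\bar 0}\subseteq\mathfrak{T}$ I verify that $\mathrm{Nil}(KO_{\bar 0})$ is an ideal of $KO_0\cap KO_{\bar 0}$. Using $[KO_0,KO_1]\subseteq KO_1$ one disposes at once of every bracket in which one factor lies in the summand $KO_1\cap KO_{\bar 0}$, as well as of $[KO_1\cap KO_{\bar 0},\mathrm{Nil}(KO_{[0]}\cap KO_{\bar 0})]\subseteq KO_1\cap KO_{\bar 0}$; what remains is to see that $\mathrm{Nil}(KO_{[0]}\cap KO_{\bar 0})$ is an ideal of $KO_{[0]}\cap KO_{\bar 0}$. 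By Lemma \ref{l1'}, $D_{KO}$ carries $\mathcal O_{\mathbf{s},[2]}(n,n)$, equipped with the bracket $\{a,b\}:=T_H(a)(b)$, isomorphically onto the quadratic part of $KO_{[0]}$; using Lemma \ref{l2} one then identifies $\mathrm{Nil}(KO_{[0]}\cap KO_{\bar 0})$ with $SHO'(n,n)\cap KO_{[0]}\cap KO_{\bar 0}$. The even quadratics span a copy of $\mathfrak{gl}(n)$ in $KO_{[0]}\cap KO_{\bar 0}$ and $D_{KO}(x_{2n+1})$ is central there, so $SHO'(n,n)\cap KO_{[0]}\cap KO_{\bar 0}$ is precisely the derived subalgebra of $KO_{[0]}\cap KO_{\bar 0}$ (a copy of $\mathfrak{sl}(n)$), hence an ideal; this finishes the first inclusion.

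For the reverse inclusion let $y=y_{[-1]}+y_{[0]}+y_1\in\mathfrak{T}$, fix $l\in\overline{1,n}$, and test $y$ against $z:=D_{KO}(x_lx_{2n+1})$. As $x_lx_{2n+1}$ has principal degree $3$ we have $z\in KO_{[1]}\subseteq KO_1$, and $z$ is even (since $x_lx_{2n+1}$ is odd and $D_{KO}$ reverses parity), so $z\in KO_1\cap KO_{\bar 0}\subseteq\mathrm{Nil}(KO_{\bar 0})$ by Lemma \ref{l1}(4); because $y\in\mathfrak{T}$ this gives $[y,z]\in\mathrm{Nil}(KO_{\bar 0})\subseteq KO_0$. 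Now $[y_{[0]},z]\in KO_{[1]}$ and $[y_1,z]\in KO_2$, so the principal degree $0$ component of $[y,z]$ equals $[y_{[-1]},z]$. A short computation from $(\ref{liuee1})$, using $D_{KO}(x_{i'})=-\partial_i-x_{i'}\partial_{2n+1}$, gives $[D_{KO}(x_{i'}),z]=-\delta_{il}D_{KO}(x_{2n+1})-D_{KO}(x_lx_{i'})$, and therefore
\begin{equation*}
[y_{[-1]},z]=-a_lD_{KO}(x_{2n+1})-\sum_{i=1}^na_iD_{KO}(x_lx_{i'}).
\end{equation*}
The second summand is $T_H$ of a quadratic in $x_1,\ldots,x_{2n}$ and hence involves no $\partial_{2n+1}$, whereas $D_{KO}(x_{2n+1})=-E-2x_{2n+1}\partial_{2n+1}$ has a nonzero $\partial_{2n+1}$-coefficient; so if $a_l\neq 0$ then $[y_{[-1]},z]\notin SHO'(n,n)$. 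But $[y_{[-1]},z]$ is the degree $0$ component of $[y,z]\in\mathrm{Nil}(KO_{\bar 0})$, so it lies in $\mathrm{Nil}(KO_{[0]}\cap KO_{\bar 0})\subseteq SHO'(n,n)$, a contradiction. Hence $a_l=0$ for every $l$, so $y_{[-1]}=0$ and $y\in KO_0\cap KO_{\bar 0}$.

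I expect the reverse inclusion to be the main obstacle: the decisive move is to find the test element $z=D_{KO}(x_lx_{2n+1})$ inside $\mathrm{Nil}(KO_{\bar 0})$ and to notice that bracketing it with the $(-1)$-component $y_{[-1]}$ is forced to produce a genuine $D_{KO}(x_{2n+1})$-summand (equivalently, an element outside $SHO'(n,n)$) precisely when $a_l\neq 0$. The bracket identities themselves are immediate consequences of $(\ref{liuee1})$, but one must keep careful track of both the standard and the principal gradings, and of parities, in order to keep all auxiliary elements inside $KO_{\bar 0}$ and to know in which graded pieces the various brackets sit.
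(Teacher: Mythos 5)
Your proof is correct and follows the paper's overall skeleton (two separate inclusions, the decomposition of Lemma \ref{l1}(4), and test elements taken from $KO_1\cap KO_{\bar 0}$), but both halves are executed differently from the paper. For the hard inclusion $\mathfrak{T}\subseteq KO_0\cap KO_{\bar 0}$ the paper assumes $a_j\neq 0$, chooses a second index $k\neq j$ in $\overline{1,n}$ and brackets with the cubic $D_{KO}(x_jx_kx_{k'})$, so that the degree-zero component $-a_jD_{KO}(x_kx_{k'})-a_kD_{KO}(x_jx_{k'})$ contradicts Lemma \ref{l2}(2); you bracket with $D_{KO}(x_lx_{2n+1})$ instead, which forces a $D_{KO}(x_{2n+1})$-summand and hence an element outside $SHO'(n,n)$. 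The two contradictions are of the same nature --- in both cases one uses that the degree-zero part of an element of $\mathrm{Nil}(KO_{\bar 0})$ lies in $\mathrm{Nil}(KO_{[0]}\cap KO_{\bar 0})\subseteq SHO'(n,n)$, a containment that is established inside the proof of Lemma \ref{l2} rather than in its statement, so you and the paper lean on it equally --- but your test element needs no auxiliary index and therefore also covers $n=1$, which the paper's choice of $k\neq j$ tacitly excludes. For the easy inclusion the paper writes $KO_0\cap KO_{\bar 0}=\mathrm{span}_{\mathbb F}\mathrm{nil}(KO_{\bar 0})+Y$ with $Y$ spanned by the $D_{KO}(x_ix_{i'})$ and $D_{KO}(x_{2n+1})$ and checks $[Y,\mathrm{Nil}(KO_{\bar 0})]\subseteq\mathrm{Nil}(KO_{\bar 0})$ directly from (\ref{liuee1}); you instead identify $\mathrm{Nil}(KO_{[0]}\cap KO_{\bar 0})$ with the trace-zero part of $KO_{[0]}\cap KO_{\bar 0}\cong\mathfrak{gl}(n)\oplus\mathbb F D_{KO}(x_{2n+1})$, i.e.\ with its derived subalgebra, and invoke that a derived subalgebra is an ideal --- slightly longer but more structural. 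If you keep your version, two details deserve to be written out: the identification needs the diagonal differences to be generated by \emph{even} ad-nilpotent elements, so use $[D_{KO}(x_ix_{j'}),D_{KO}(x_jx_{i'})]=-D_{KO}(x_ix_{i'}-x_jx_{j'})$ rather than the odd-element bracket displayed in Lemma \ref{l2}; and the inclusion $\mathrm{nil}(KO_{[0]}\cap KO_{\bar 0})\subseteq SHO'(n,n)$ amounts to the assertion that an ad-nilpotent element of the $\mathfrak{gl}(n)$-copy has trace zero, which merits its one-line justification (its matrix acts on $\mathrm{span}_{\mathbb F}\{x_1,\ldots,x_n\}$ and must be nilpotent there).
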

\begin{proof}
Let $$y=\sum_{i=1}^{n}a_{i}D_{KO}(x_{i'})+y''\in \mathfrak{T},$$ where
$a_{i}\in \mathbb{F}$ for all $i\in \overline{1,n},$ $y''\in KO
_{0}\cap KO _{\bar{0}}.$ Assume that $a_{j}\neq0$ for some $j\in
\overline{1,n}.$ Take $ j\neq k\in \overline{1,n}.$ By Lemma
\ref{l1}(4), we have $D_{KO}(x_{j}x_{k}x_{k'})\in KO _{1}\cap KO
_{\bar{0}}\subseteq \mathrm{Nil}(KO _{\bar{0}}).$ Then
\begin{eqnarray*}
&&-a_{j}D_{KO}(x_{k}x_{k'})-a_{k}D_{KO}(x_{j}x_{k'})+h\\
&=&\Big[\sum_{i=1}^{n}a_{i}D_{KO}(x_{i'})+y'',D_{KO}(x_{j}x_{k}x_{k'})\Big]
\in\mathrm{Nil}(KO _{\bar{0}}),
\end{eqnarray*}
where $h\in KO _{1}\cap KO _{\bar{0}}.$ This contradicts Lemma
\ref{l2}(2) and then $\mathfrak{T}\subseteq KO _{0}\cap KO _{\bar{0}}.$
On the other hand, by (\ref{e2}) and  Lemma \ref{l2}(2), we have
\begin{eqnarray*}KO _{0}\cap KO _{\bar{0}}=\mathrm{span}_{\mathbb{F}}\mathrm{nil}(KO _{\bar{0}})
+Y,
\end{eqnarray*}
where $Y:=\mathrm{span}_{\mathbb{F}}\{D_{KO}(x_{i}x_{i'}),
D_{KO}(x_{2n+1})\mid i\in \overline{1,n}\}.$ By (\ref{liuee1}), we
have $$ [Y, \mathrm{Nil}(KO _{\bar{0}})]\subseteq \mathrm{Nil}(KO
_{\bar{0}}). $$ Then
\begin{eqnarray*}
[KO _{0}\cap KO _{\bar{0}},\mathrm{Nil}(KO
_{\bar{0}})]=[\mathrm{span}_{\mathbb{F}}\mathrm{nil}(KO _{\bar{0}})
+Y, \mathrm{Nil}(KO _{\bar{0}})]\subseteq \mathrm{Nil}(KO
_{\bar{0}}).
\end{eqnarray*}  The proof is complete.
\end{proof}
\begin{lemma}\label{p5}
$\mathfrak{Q}\subseteq\mathrm{span}_{\mathbb{F}}\{D_{KO}(x_{i}x_{j})\mid 1\leq
i\leq j \leq n\}+KO _{1}\cap KO _{\bar{1}}.$
\end{lemma}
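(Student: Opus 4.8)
The plan is to pin down, for an odd element $y = D_{KO}(f) \in \mathfrak{Q}$, which homogeneous components of $f$ can occur, using the defining condition $[y, KO_{\bar 1}] \subseteq \mathfrak{T} = KO_0 \cap KO_{\bar 0}$ (Proposition \ref{p12}). First I would decompose $y$ with respect to the principal grading: write $y = \sum_{i \geq -2} y_{[i]}$ with $y_{[i]} \in KO_{[i]} \cap KO_{\bar 1}$. The odd components of $KO$ in low principal degree are easy to list from the definition of $D_{KO}$ together with the parity bookkeeping: $KO_{[-2]} = \mathbb{F}D_{KO}(1)$ is even, so there is no odd part in degree $-2$; in degree $-1$ the odd part is spanned by those $D_{KO}(x_k)$ with $x_k$ of appropriate parity (recall the principal grading is of type $(1,\dots,1\mid 1,\dots,1,2)$, so $x_{2n+1}$ sits in degree $2$); and in degree $0$ the odd generators are $D_{KO}(x_ix_j)$ with exactly one of $i,j$ in $\overline{n+1,2n}$ together with $D_{KO}(x_{2n+1})$-type terms of the right parity. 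The claim to be proved says, in effect, that the degree $\le 0$ part of $y$ must already lie in $\mathrm{span}_{\mathbb F}\{D_{KO}(x_ix_j)\mid 1\le i\le j\le n\}$, i.e. the odd ``low'' part of $y$ is built only from the products $x_ix_j$ with both indices $\le n$ — everything in principal degree $\ge 1$ is harmless because $KO_1 \cap KO_{\bar 1}$ is explicitly allowed.

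The key mechanism is a degree/bracket argument. Suppose $y_{[i]} \neq 0$ for some $i \leq 0$. Bracketing against a carefully chosen homogeneous odd element $z = D_{KO}(x_\ell)$ of degree $-1$ (or $-2$, but that component is even) produces, via \eqref{liuee1}, an element of $KO$ whose lowest principal-degree component is the bracket $[y_{[i]}, z_{[-1]}]$ living in $KO_{[i-1]}$. Since $[y, KO_{\bar 1}] \subseteq KO_0 \cap KO_{\bar 0}$, the component in degree $i - 1 < 0$ must vanish for every such $z$; this forces strong constraints on $y_{[i]}$. For $i = -1$: an odd element of $KO_{[-1]}$ has the form $\sum a_k D_{KO}(x_k)$ over the odd $x_k$'s; bracketing with suitable $D_{KO}(x_\ell)$ produces a nonzero multiple of $D_{KO}(1) \in KO_{[-2]}$ unless all the coefficients vanish, so $y_{[-1]} = 0$. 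For $i = 0$: the odd part of $KO_{[0]}$ is spanned by the $D_{KO}(x_ix_j)$ with exactly one index $>n$ (up to $2n$) plus possibly a $D_{KO}(x_{2n+1})$-type odd piece; one checks that brackets of such an element with the degree $-1$ generators $D_{KO}(x_k)$, $k\le 2n$, land in $KO_{[-1]}\cap KO_{\bar 0}$, and requiring this to lie in $KO_0\cap KO_{\bar 0}$ — i.e. to be zero in degree $-1$ — kills every summand with an index $> n$. This leaves $y_{[0]}$ (and any surviving $y_{[-1]}$, which is zero) with only the $D_{KO}(x_ix_j)$, $1\le i\le j\le n$; the remaining components $y_{[i]}$, $i\ge 1$, together make up the allowed summand in $KO_1\cap KO_{\bar 1}$.

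Concretely I would organize the write-up as follows. Let $y \in \mathfrak{Q}$ and write $y = y' + y''$ with $y' \in (KO_{[-2]} \oplus KO_{[-1]} \oplus KO_{[0]}) \cap KO_{\bar 1}$ and $y'' \in KO_1 \cap KO_{\bar 1}$. It suffices to show $y'$ is a linear combination of the $D_{KO}(x_ix_j)$, $1 \le i \le j \le n$. Expand $y' = \alpha D_{KO}(1) + \sum_{k} \beta_k D_{KO}(x_k) + \sum_{k,\ell} \gamma_{k\ell} D_{KO}(x_k x_\ell) + (\text{odd } D_{KO}(x_{2n+1}) \text{ terms})$, keeping only parity-odd summands. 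First, test against $D_{KO}(x_{2n+1})$-type elements and the degree $-1$ generators $D_{KO}(x_k)$ with $k\le 2n$; using the commutation formula \eqref{liuee1} and extracting the component of lowest principal degree, the condition $[y, D_{KO}(x_k)] \in KO_0 \cap KO_{\bar 0}$ forces the degree $-2$ and degree $-1$ components of each such bracket to vanish. Reading these off degree-by-degree: the degree $-2$ constraint forces $\beta_k = 0$ for all $k$ with $D_{KO}(x_k)$ odd (this is the $y_{[-1]} = 0$ step, essentially reproducing the computation in Lemma \ref{l1}(3)); the degree $-1$ constraint then forces $\gamma_{k\ell} = 0$ whenever exactly one of $k,\ell$ lies in $\overline{n+1, 2n}$, and kills the odd $D_{KO}(x_{2n+1} x_k)$ terms. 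The main obstacle is the somewhat fiddly parity and sign bookkeeping in the operator $T_H$ and in \eqref{liuee1} — one must be careful that the bracket $[D_{KO}(x_kx_\ell), D_{KO}(x_m)]$ is computed with the correct sign so that the ``test vectors'' genuinely detect the unwanted components — but this is routine given Lemma \ref{l1'} and the explicit formula \eqref{e3}; there is no conceptual difficulty beyond choosing, for each potential unwanted summand of $y'$, a degree $-1$ generator whose bracket isolates it.
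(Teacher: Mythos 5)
Your overall strategy---expand $y$ by principal degree and kill the unwanted low-degree components by bracketing against odd test elements, using $[y,KO_{\bar{1}}]\subseteq KO_{0}\cap KO_{\bar{0}}$---is the same as the paper's, but your parity bookkeeping is inverted, and this creates genuine gaps rather than just sign fiddliness. The map $D_{KO}$ \emph{reverses} parity: $\mathrm{p}(D_{KO}(a))=\mathrm{p}(a)+\bar{1}$ (for instance $D_{KO}(1)=-2\partial_{2n+1}$ is odd). Hence $KO_{[-2]}=\mathbb{F}D_{KO}(1)$ is odd, not even, so the degree $-2$ component of $y$ cannot be dismissed on parity grounds; the odd part of $KO_{[-1]}$ is spanned by the $D_{KO}(x_k)$ with $k\in\overline{1,n}$ (the \emph{even} variables), not the odd ones; and the odd part of $KO_{[0]}$ is spanned by $D_{KO}(x_ix_j)$ with both indices in $\overline{1,n}$ or both in $\overline{n+1,2n}$, while the mixed products and $D_{KO}(x_{2n+1})$ are even. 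So the summands you propose to eliminate (mixed $D_{KO}(x_ix_j)$ and $x_{2n+1}$-terms) do not occur in an odd $y$ at all, whereas the summands that actually must be eliminated---$\alpha D_{KO}(1)$, $\sum_{k\le n}a_kD_{KO}(x_k)$, and $\sum_{i<j\le n}b_{ij}D_{KO}(x_{i'}x_{j'})$---are partly absent from your list.

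Second, your test vectors are insufficient even after correcting the parities. Since $\mathfrak{Q}$ is defined by bracketing with $KO_{\bar{1}}$ only, every test element must be odd; $D_{KO}(x_{2n+1})$ is even, so the ``$D_{KO}(x_{2n+1})$-type'' tests are illegitimate. More seriously, the odd degree $-1$ elements satisfy $[D_{KO}(1),D_{KO}(x_\ell)]=0$ and $[D_{KO}(x_k),D_{KO}(x_\ell)]=0$ for $k,\ell\in\overline{1,n}$, so brackets with degree $-1$ generators detect neither the degree $-2$ nor the degree $-1$ component of $y$; they only detect the bad degree-$0$ terms $D_{KO}(x_{i'}x_{j'})$ (which is precisely the paper's final step). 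The paper kills the degree $-2$ term by bracketing with the odd, degree $+1$ element $D_{KO}(x_{1'}x_{2n+1})$, which produces $-2aD_{KO}(x_{1'})$ in degree $-1$, and kills the degree $-1$ part by bracketing with the odd degree-$0$ elements $D_{KO}(x_{j'}x_{k'})$. As written, your plan has no mechanism to exclude these two components, so the proof does not go through without adding such higher-degree odd test elements and fixing the parity classification.
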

\begin{proof}
For   $y\in \mathfrak{Q},$ we may write $$y=D_{KO}(a)+y',$$ where $a\in
\mathbb{F},$ $y'\in KO _{-1}\cap KO _{\bar{1}}.$ Note that
$D_{KO}(x_{1'}x_{2n+1})\in KO _{\bar{1}}. $ Then
\begin{eqnarray*}
-2aD_{KO}(x_{1'})+h=[D_{KO}(a)+y',D_{KO}(x_{1'}x_{2n+1})]\in KO
_{0}\cap KO _{\bar{0}},
\end{eqnarray*}
where $h\in KO _{0}\cap KO _{\bar{0}}.$ Then $a=0.$ Thus we may
write
$$y=\sum_{i=1}^{n}a_{i}D_{KO}(x_{i})+y'',$$ where $a_{i}\in
\mathbb{F}$ for all $i\in \overline{1,n},$ $y''\in KO _{0}\cap KO
_{\bar{1}}.$ If $a_{j}\neq0$ for some $j\in \overline{1,n},$ take
$j\neq k \in \overline{1,n}.$ Note that $D_{KO}(x_{j'}x_{k'})\in KO
_{\bar{1}}.$  We have
\begin{eqnarray*}D_{KO}(a_{j}x_{k'}-a_{k}x_{j'})+h=\Big[\sum_{i=1}^{n}a_{i}D_{KO}(x_{i})
+y'',D_{KO}(x_{j'}x_{k'})\Big]\in KO _{0}\cap KO _{\bar{0}},
\end{eqnarray*}
where $h\in KO _{0}\cap KO _{\bar{0}},$ contradicting that
$a_{j}\neq 0$. Thus we may write
$$y=\sum_{1\leq i\leq j\leq n} a_{ij}D_{KO}(x_{i}x_{j})+\sum_{1\leq i< j\leq n}b_{ij}D_{KO}(x_{i'}x_{j'})+y'',$$
where $a_{ij}, b_{ij}\in \mathbb{F}$ for all $1\leq i\leq j\leq n,$
$y''\in KO _{1}\cap KO _{\bar{1}}.$ Assume that $b_{kl}\neq0$ for
some  $1\leq k< l\leq n.$ Since $D_{KO}(x_{k})\in KO _{\bar{1}},$
we have
\begin{eqnarray*}
\sum_{i=1}^{k-1}-b_{ik}D_{KO}(x_{i'})+\sum_{i=
k+1}^{n}b_{ki}D_{KO}(x_{i'})+h=[y,D_{KO}(x_{k})]\in KO _{0}\cap KO
_{\bar{0}},
\end{eqnarray*}
where $h\in KO _{0}\cap KO _{\bar{0}},$ contradicting the assumption
that $b_{kl}\neq0$. Then
$$\mathfrak{Q}\subseteq\mathrm{span}_{\mathbb{F}}\{D_{KO}(x_{i}x_{j})\mid
1\leq i\leq j \leq n\}+KO _{1}\cap KO _{\bar{1}}$$ and this
completes the proof.
\end{proof}

\begin{remark}\label{r6} For  $i\neq j \in \overline{1,n},$
we have
 $D_{KO}(x_{i}x_{i'}x_{2n+1})\in
\mathfrak{Q},
 D_{KO}(x_{i'}x_{j}x_{j'})\in \mathfrak{Q}.$

\end{remark}
\begin{proposition}\label{p7}
$\mathfrak{M}=KO _{0}\cap KO _{\bar{1}}.$ In particular, $KO _{0}\cap KO
_{\bar{1}}$ is an invariant subalgebra of $KO $.
\end{proposition}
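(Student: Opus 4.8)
The plan is to prove the two inclusions $KO_{0}\cap KO_{\bar 1}\subseteq \mathfrak{M}$ and $\mathfrak{M}\subseteq KO_{0}\cap KO_{\bar 1}$ separately, using Lemma \ref{p5} (and Remark \ref{r6}) as the structural input on $\mathfrak{Q}$, exactly paralleling how Proposition \ref{p12} handled $\mathfrak{T}$. For the easy inclusion, note that $\mathfrak{Q}\subseteq KO_{-1}\oplus KO_{0}\oplus KO_{1}$ by Lemma \ref{p5}; hence for $y\in KO_{0}\cap KO_{\bar 1}$ one has $[y,\mathfrak{Q}]\subseteq KO_{-1}\oplus KO_{0}\oplus KO_{1}\oplus KO_{2}$ restricted to the $\bar 0$ part, and one must check this lands inside $\mathrm{Nil}(KO_{\bar 0})$. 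The $KO_{1}$-and-higher part sits in $KO_{1}\cap KO_{\bar 0}\subseteq \mathrm{Nil}(KO_{\bar 0})$ by Lemma \ref{l1}(4); the $KO_{-1}$ and $KO_{0}$ parts of $[y,\mathfrak{Q}]$ need to be controlled, and here I would pair $y\in KO_0\cap KO_{\bar 1}$ against the specific generators $D_{KO}(x_ix_j)$ with $1\le i\le j\le n$ and against $KO_1\cap KO_{\bar 1}$, showing the brackets produce only elements of $\mathrm{Nil}(KO_{[0]}\cap KO_{\bar 0})+KO_1\cap KO_{\bar 0}$, which by Lemma \ref{l1}(4) equals $\mathrm{Nil}(KO_{\bar 0})$. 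The point is that $D_{KO}(x_ix_j)$ with $i,j\le n$ lie over $SHO'(n,n)$ and brackets with $KO_0\cap KO_{\bar 1}$ cannot generate the forbidden elements $D_{KO}(x_ix_{i'})$ or $D_{KO}(x_{2n+1})$.

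For the reverse inclusion I would take $y\in\mathfrak{M}$ and write it in components with respect to the principal grading, $y=y_{[-1]}+y_{[0]}+y_{[1]}+\cdots$ with all $y_{[i]}\in KO_{\bar 1}$. First I would kill the negative tail: since $\mathfrak{M}\subseteq KO_{\bar 1}$ and $KO_{\bar 1}$ starts in degree $-1$, I must show $y_{[-1]}=0$. Writing $y_{[-1]}=\sum_{i}a_iD_{KO}(x_i)$ (the odd part of $KO_{[-1]}$ is spanned by the $D_{KO}(x_i)$ with $i$ in the appropriate range), I would bracket against a carefully chosen element of $\mathfrak{Q}$ — using Remark \ref{r6}, something like $D_{KO}(x_{i'}x_jx_{j'})$ or $D_{KO}(x_ix_{i'}x_{2n+1})$ — to produce, modulo $\mathrm{Nil}(KO_{\bar 0})$, a nonzero multiple of $D_{KO}(x_{i'})\in KO_{[-1]}\cap KO_{\bar 0}$, which by Lemma \ref{l1}(3)–(4) is not in $\mathrm{Nil}(KO_{\bar 0})$; this forces all $a_i=0$. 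Then I would bound the positive tail: $KO_1\cap KO_{\bar 1}\subseteq KO_1\subseteq\mathrm{nil}(KO)$ by Lemma \ref{l1}(1), so that part is automatically harmless and the content is showing that the genuine $KO_{\ge 1}$ contribution beyond $KO_0$ does no harm while the $KO_{[0]}$ component $y_{[0]}$ is unconstrained — i.e., $KO_0\cap KO_{\bar 1}$ is actually contained in $\mathfrak{M}$, which is the first inclusion; so the real work is just showing $\mathfrak{M}$ has no $KO_{[-1]}$ component.

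The main obstacle will be the bookkeeping in the bracket $[y_{[-1]},\mathfrak{Q}]$: I need an element of $\mathfrak{Q}$ (supplied by Remark \ref{r6}) whose bracket with $\sum a_iD_{KO}(x_i)$ has a nonzero $KO_{[0]}\cap KO_{\bar 0}$-component of the forbidden type (a multiple of some $D_{KO}(x_kx_{k'})$ or $D_{KO}(x_{2n+1})$), so that Lemma \ref{l2}(2) gives the desired contradiction; choosing the right test element and tracking parities/signs through \eqref{liuee1} is the delicate part, but it is entirely analogous to the computation already carried out in the proof of Proposition \ref{p12}. Once $y_{[-1]}=0$ is established, combining with Lemma \ref{p5}-type degree reasoning and Lemma \ref{l1}(1),(4) closes the argument. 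Finally, invariance of $KO_0\cap KO_{\bar 1}$ under $\mathrm{Aut}(KO)$ is immediate since $\mathfrak{M}$ was defined purely in terms of $KO_{\bar 1}$, $\mathfrak{Q}$ and $\mathrm{Nil}(KO_{\bar 0})$, all of which are invariant.
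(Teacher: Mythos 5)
Your overall route coincides with the paper's (easy inclusion via Lemma \ref{p5} and Lemma \ref{l1}(4); reverse inclusion by killing negative components of $y\in\mathfrak{M}$ against the test elements of Remark \ref{r6} and invoking Lemma \ref{l2}(2)), but there is one genuine gap: you claim that ``$KO_{\bar{1}}$ starts in degree $-1$'' and hence that only the component $y_{[-1]}$ has to be eliminated. This is false. The map $D_{KO}$ reverses parity (both $T_{H}$ and $\partial_{2n+1}$ are odd operators), so $KO_{[-2]}=\mathbb{F}\cdot D_{KO}(1)=\mathbb{F}\cdot\partial_{2n+1}$ lies in $KO_{\bar{1}}$, and an element of $\mathfrak{M}$ may a priori have a nonzero degree $-2$ component $aD_{KO}(1)$. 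This component must be killed first, and doing so is a substantive step: by Remark \ref{r6} one has $D_{KO}(x_{i}x_{i'}x_{2n+1})\in\mathfrak{Q}$, and $[aD_{KO}(1)+y',D_{KO}(x_{i}x_{i'}x_{2n+1})]=2aD_{KO}(x_{i}x_{i'})+h$ with $h\in KO_{1}\cap KO_{\bar{0}}$, which by Lemma \ref{l1}(4) and Lemma \ref{l2}(2) cannot lie in $\mathrm{Nil}(KO_{\bar{0}})$ unless $a=0$. Without this step your argument only yields $\mathfrak{M}\subseteq KO_{[-2]}+KO_{0}\cap KO_{\bar{1}}$, which is not the assertion of the proposition.

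A secondary slip: your first description of how to kill $y_{[-1]}$ --- producing a nonzero multiple of $D_{KO}(x_{i'})\in KO_{[-1]}\cap KO_{\bar{0}}$ by bracketing with the elements of Remark \ref{r6} --- is degree-inconsistent, since those elements have principal degrees $1$ and $2$, so their brackets with a degree $-1$ element land in degrees $\geq 0$; moreover Lemma \ref{p5} only bounds $\mathfrak{Q}$ from above, so no degree-$0$ member of $\mathfrak{Q}$ is guaranteed to be available for such a computation. Your later, more careful formulation (bracket $\sum_{i}a_{i}D_{KO}(x_{i})$ with $D_{KO}(x_{j'}x_{k}x_{k'})$ to obtain a forbidden component $a_{j}D_{KO}(x_{k}x_{k'})$ modulo $KO_{1}\cap KO_{\bar{0}}$, then apply Lemma \ref{l2}(2)) is exactly the paper's argument, so that part is fine once stated in that form. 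Finally, in the easy inclusion note that Lemma \ref{p5} already gives $\mathfrak{Q}\subseteq KO_{0}$, so $[KO_{0}\cap KO_{\bar{1}},\mathfrak{Q}]$ has no components of negative degree to control; the only point to check is that brackets with the generators $D_{KO}(x_{i}x_{j})$, $i,j\leq n$, avoid the elements excluded by Lemma \ref{l2}(2), as you indicate.
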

\begin{proof}
By (\ref{e2}), Lemmas \ref{l1}(4), \ref{l2}(2) and
\ref{p5}, we have
\begin{eqnarray*}
[KO _{0} \cap  KO _{\bar{1}},\mathfrak{Q}] &\subseteq&  [KO _{0}\cap KO
_{\bar{1}},\mathrm{span}_{\mathbb{F}}\{D_{KO}(x_{i}x_{j})\mid 1\leq
i\leq j \leq n\}  +KO _{1}\cap
KO _{\bar{1}}]\\
&\subseteq& \mathrm{Nil}(KO _{\bar{0}}).
\end{eqnarray*}
Hence $KO _{0}\cap KO _{\bar{1}}\subseteq \mathfrak{M}.$ Conversely, for $y\in
\mathfrak{M},$ we may write $$y=D_{KO}(a)+y',$$ where $a\in \mathbb{F},$ $y'\in
KO _{-1}\cap KO _{\bar{1}}.$ By Remark \ref{r6}, we have
\begin{eqnarray*}
&&2aD_{KO}(x_{i}x_{i'})+h=[D_{KO}(a)+y',D_{KO}(x_{i}x_{i'}x_{2n+1})]\in
\mathrm{Nil}(KO _{\bar{0}}),
\end{eqnarray*}
where $h\in KO _{1}\cap KO _{\bar{0}}.$ By Lemma \ref{l2}(2), we
have $a=0.$ Thus we may write
$$y=\sum_{i=1}^{n}a_{i}D_{KO}(x_{i})+y'',$$ where $a_{i}\in
\mathbb{F}$ for all $i\in \overline{1,n},$ $y''\in KO _{0}\cap KO
_{\bar{1}}.$ Assume that $a_{j}\neq0$ for some $j\in
\overline{1,n}.$ Take $j\neq k\in \overline{1,n}$. By Remark
\ref{r6}, we have
\begin{eqnarray*}
&&a_{j}D_{KO}(x_{k}x_{k'})-a_{k}D_{KO}(x_{j'}x_{k})+h\\
&=&\Big[\sum_{i=1}^{n}a_{i}D_{KO}(x_{i})+y'',D_{KO}(x_{j'}x_{k}x_{k'})\Big]
\in \mathrm{Nil}(KO _{\bar{0}}),
\end{eqnarray*}
where $h\in KO _{1}\cap KO _{\bar{0}}.$ By Lemma \ref{l2}(2), we have
$a_{j}=0.$ This contradicts the assumption that $a_{j}\neq0.$ Thus
$\mathfrak{M}\subseteq KO _{0}\cap KO _{\bar{1}}$ and the proof is complete.
\end{proof}

The key in this paper is the following proposition.

\begin{proposition}\label{t1}
$KO_{0}$ is an invariant subalgebra of $KO $.
\end{proposition}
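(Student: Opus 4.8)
The plan is to realize $KO_0$ intrinsically as a sum of the two already-established invariant subalgebras together with the normalizer construction that detects the degree $-1$ piece. We have from Propositions \ref{p12} and \ref{p7} that $KO_0\cap KO_{\bar 0}=\mathfrak T$ and $KO_0\cap KO_{\bar 1}=\mathfrak M$ are invariant subalgebras, and $KO_0=(KO_0\cap KO_{\bar 0})\oplus(KO_0\cap KO_{\bar 1})$ as vector superspaces; hence $KO_0=\mathfrak T+\mathfrak M$ is automatically an invariant \emph{subspace}. So the only thing left is to check that $KO_0$ is closed under the bracket, i.e. that it is a subalgebra — but this is immediate from the principal $\mathbb Z$-grading, since $[KO_{[0]},KO_{[0]}]\subseteq KO_{[0]}$ and $[KO_{i},KO_{j}]\subseteq KO_{i+j}$ give $[KO_0,KO_0]\subseteq KO_0$. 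Thus the real content is the invariance, which reduces to showing $\varphi(KO_0)\subseteq KO_0$ for every automorphism $\varphi$, and this follows once we know $KO_0=\mathfrak T\oplus\mathfrak M$ together with the invariance of $\mathfrak T$ and $\mathfrak M$.

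Accordingly, the first step I would carry out is to record the decomposition $KO_0=(KO_0\cap KO_{\bar0})\oplus(KO_0\cap KO_{\bar1})$ coming from the $\mathbb Z_2$-grading of $KO$, so that $KO_0=\mathfrak T+\mathfrak M$ by the two preceding propositions. The second step is to invoke that $\mathfrak T$ and $\mathfrak M$ were already observed to be invariant subspaces (indeed invariant subalgebras) under $\operatorname{Aut}(KO)$; since a sum of invariant subspaces is invariant, $KO_0$ is an invariant subspace. The third step is the purely formal verification that $KO_0$ is a subalgebra, using $[KO_i,KO_j]\subseteq KO_{i+j}$ from the principal filtration; combining invariance with the subalgebra property gives the claim.

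Alternatively — and this may be the form the authors actually use, since it gives a cleaner self-contained statement — one can phrase the argument so that the heavy lifting in Propositions \ref{p12} and \ref{p7} is what does the work, and the present proposition is essentially their corollary: define an invariant subspace by $\mathfrak T+\mathfrak M$ and identify it with $KO_0$. I would therefore expect the proof here to be short, just citing the two propositions and the grading. The only place where care is needed is making sure the even and odd parts really exhaust $KO_0$ and that no ``mixed'' invariant has been overlooked; but since $KO$ is $\mathbb Z_2$-graded and every automorphism is parity-preserving, $\varphi(KO_0)=\varphi(KO_0\cap KO_{\bar0})\oplus\varphi(KO_0\cap KO_{\bar1})=\mathfrak T\oplus\mathfrak M=KO_0$, and there is no obstacle.

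I do not anticipate a genuine difficulty in this proposition itself; all the subtlety was already spent in the ad-nilpotent computations of Section 2 and in Propositions \ref{p12} and \ref{p7}. If anything, the ``hard part'' is purely bookkeeping: confirming that the normalizer and centralizer constructions defining $\mathfrak T$, $\mathfrak Q$, $\mathfrak M$ are literally stable under arbitrary automorphisms (which the text has already asserted), and that their union spans exactly the $0$-component and nothing more. Once that is in hand, the conclusion $KO_0=\mathfrak T+\mathfrak M$ is an invariant subalgebra is immediate.
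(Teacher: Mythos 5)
Your proposal is correct and matches the paper's proof, which simply derives the proposition from Propositions \ref{p12} and \ref{p7}: since $KO_0=(KO_0\cap KO_{\bar0})\oplus(KO_0\cap KO_{\bar1})=\mathfrak T\oplus\mathfrak M$ and each summand is invariant, $KO_0$ is invariant, while the subalgebra property is immediate from the filtration. Your extra remarks (parity-preservation of automorphisms, bracket closure) are exactly the implicit bookkeeping the paper leaves unstated.
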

\begin{proof}
It follows from Propositions \ref{p12} and \ref{p7}.
\end{proof}

\section{Filtration, automorphisms and classification}

One of the main results is as follows, which is a direct consequence of  Proposition \ref{t1} and the following
Lemmas \ref{l13} and
\ref{p14}.
\begin{theorem}\label{t15}
The principal filtration of $KO $ is invariant under the
automorphisms of $KO,$ that is, $\varphi(KO_i)=KO_i$ for all $i\geq-2$ and all automorphisms $\varphi$ of $KO.$\qed
\end{theorem}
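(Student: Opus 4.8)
The plan is to bootstrap the invariance of the whole principal filtration from the already-established invariance of the single piece $KO_{0}$ (Proposition \ref{t1}), using the two auxiliary Lemmas \ref{l13} and \ref{p14} that the excerpt promises. Concretely, $KO_{0}$ being invariant means $\varphi(KO_{0})=KO_{0}$ for every automorphism $\varphi$. The filtration members $KO_{i}$ for $i\geq 1$ should then be recovered intrinsically as iterated normalizers or derived-type subalgebras sitting inside $KO_{0}$: one expects a chain of the form $KO_{1}=[KO_{0},KO_{0}]$ modulo lower terms, or $KO_{i+1}=\{x\in KO_{i}\mid [x,KO_{-2}]\subseteq KO_{i-1}\}$, so that each higher term is defined by a bracket condition relative to the already-invariant ones; whatever the precise formulation, Lemma \ref{l13} presumably packages exactly this, giving $\varphi(KO_{i})\subseteq KO_{i}$ for all $i\geq 0$. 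For the negative-degree parts $KO_{-1}$ and $KO_{-2}$ one cannot argue ``downward'' the same way, so Lemma \ref{p14} is expected to supply a dual characterization — e.g. $KO_{-1}=\{x\in KO\mid [x,KO_{1}]\subseteq KO_{0}\}$ and $KO_{-2}$ as the centralizer-type object $\{x\mid [x,KO_{0}]\subseteq KO_{-1}\}$ or the (graded) center of some invariant piece — again phrased purely in terms of brackets among invariant subalgebras, hence preserved by $\varphi$.

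First I would invoke Proposition \ref{t1} to fix $\varphi(KO_{0})=KO_{0}$. Next I would feed this into Lemma \ref{l13} to obtain $\varphi(KO_{i})\subseteq KO_{i}$ for every $i\geq 0$; since $\varphi$ is an automorphism its inverse gives the reverse inclusion, so $\varphi(KO_{i})=KO_{i}$ for $i\geq 0$. Then I would apply Lemma \ref{p14} to get $\varphi(KO_{-1})=KO_{-1}$ and $\varphi(KO_{-2})=KO_{-2}$, completing all indices $i\geq -2$. Because $KO=\bigcup_i KO_{i}$ and each $KO_{i}=KO$ for $i\leq -2$ while the chain is exhaustive and separated, these equalities for every $i$ are exactly the assertion that the principal filtration is invariant, which is the content of Theorem \ref{t15}. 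The write-up is short: it is essentially ``combine Proposition \ref{t1} with Lemmas \ref{l13} and \ref{p14}, using $\varphi$ and $\varphi^{-1}$ to upgrade inclusions to equalities.''

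The main obstacle is not in this synthesis step but is quarantined inside Lemmas \ref{l13} and \ref{p14}: one must exhibit each $KO_{i}$ as the solution set of some bracket relation among lower- or higher-indexed invariant subalgebras, and verify that no ``extra'' elements satisfy that relation. The delicate direction is always the non-obvious inclusion — showing that an element satisfying the defining bracket condition genuinely lies in the intended filtration degree — which typically requires the explicit multiplication rule \eqref{liuee1} for $D_{KO}$, the description \eqref{e2} of the low-degree components, and the ad-nilpotency invariants assembled in Section 2 (particularly Lemma \ref{l1} and Lemma \ref{l2}) together with the normalizer computations of Section 3. Assuming those two lemmas as the excerpt instructs, the proof of Theorem \ref{t15} itself is immediate, and I would present it in exactly the two or three lines sketched above.
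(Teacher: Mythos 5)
Your top-level plan --- fix $\varphi(KO_0)=KO_0$ by Proposition \ref{t1}, then recover the remaining filtration terms through intrinsic bracket characterizations relative to already-invariant pieces, using $\varphi^{-1}$ to upgrade inclusions to equalities --- is exactly what the paper does; its proof of Theorem \ref{t15} is literally the one sentence citing Proposition \ref{t1} and Lemmas \ref{l13} and \ref{p14}. However, the content you assign to those two lemmas, and hence the order of your deduction, does not match the actual statements. In the paper, Lemma \ref{l13} concerns the \emph{negative} part: $KO_{-1}/KO_{0}$ is the unique irreducible $KO_{0}$-submodule of $KO/KO_{0}$, which is what yields $\varphi(KO_{-1})=KO_{-1}$ once $KO_{0}$ is known invariant. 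Lemma \ref{p14} then handles $i\geq 1$ via $KO_{i}=\{y\in KO_{i-1}\mid [y,KO_{-1}]\subseteq KO_{i-1}\}$, so the upward induction is taken relative to $KO_{-1}$ and can only be run \emph{after} $KO_{-1}$ is invariant. Your ordering (all $i\geq 0$ first, negative degrees afterwards) is incompatible with the lemmas as they stand, and note also that $KO_{-2}=\sum_{j\geq -2}KO_{[j]}=KO$, so that term needs no argument at all.

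More seriously, the characterizations you guess for the positive terms would fail, so the gap is not merely one of bookkeeping. Since $KO_{-2}=KO$, your condition $KO_{i+1}=\{x\in KO_{i}\mid [x,KO_{-2}]\subseteq KO_{i-1}\}$ reads $[x,KO]\subseteq KO_{i-1}$, and for $i=0$ this set strictly contains $KO_{1}$: the element $D_{KO}(x_{1}x_{2})\in KO_{[0]}$ satisfies $[D_{KO}(x_{1}x_{2}),D_{KO}(1)]=0$ by (\ref{liuee1}), while its brackets with $KO_{[j]}$, $j\geq -1$, land in degrees $\geq -1$, so $[D_{KO}(x_{1}x_{2}),KO]\subseteq KO_{-1}$ although $D_{KO}(x_{1}x_{2})\notin KO_{1}$. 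The alternative $KO_{1}=[KO_{0},KO_{0}]$ ``modulo lower terms'' is likewise not a characterization (that bracket has a large degree-zero part). Your proposed description $KO_{-1}=\{x\in KO\mid [x,KO_{1}]\subseteq KO_{0}\}$ happens to be true, but within your scheme it is circular, since it presupposes the invariance of $KO_{1}$, which you obtain only from the failed step. The missing idea is precisely the paper's Lemma \ref{l13}: treat $KO_{-1}$ first, as the preimage of the unique irreducible $KO_{0}$-submodule of $KO/KO_{0}$, and only then run the induction of Lemma \ref{p14} against $KO_{-1}$ to cover all $i\geq 1$.
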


\begin{lemma}\label{l13}
$KO _{-1}/KO _{0}$ is the unique irreducible $KO _{0}$-submodule of
$KO /KO _{0}$. In particular, $KO _{-1}$ is an invariant subalgebra
of $KO $.
\end{lemma}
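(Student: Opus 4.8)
The plan is to study $KO/KO_{0}$ as a module over the subalgebra $KO_{0}=\bigoplus_{j\geq 0}KO_{[j]}$ under the adjoint action. Writing $\overline{X}$ for the image in $KO/KO_{0}$ of a subspace $X\subseteq KO$, the principal grading descends to a decomposition $KO/KO_{0}=\overline{KO_{[-2]}}\oplus\overline{KO_{[-1]}}$ with $\dim\overline{KO_{[-2]}}=1$ and $\overline{KO_{[-1]}}=KO_{-1}/KO_{0}$; since $[KO_{[j]},KO_{[-1]}]\subseteq KO_{[j-1]}\subseteq KO_{0}$ for $j\geq 1$, the subspace $KO_{-1}/KO_{0}$ is a $KO_{0}$-submodule whose module structure factors through $KO_{[0]}$. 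I would then split the proof into (I) $KO_{-1}/KO_{0}$ is an irreducible $KO_{0}$-module, and (II) every nonzero $KO_{0}$-submodule of $KO/KO_{0}$ contains $KO_{-1}/KO_{0}$; together these give the uniqueness assertion, and invariance of $KO_{-1}$ then follows formally.

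For (I) I would first record, via $(\ref{liuee1})$, that $[D_{KO}(x_{2n+1}),D_{KO}(x_{k})]=D_{KO}(x_{k})$ for $k\in\overline{1,2n}$, so $D_{KO}(x_{2n+1})$ acts as the identity on $KO_{[-1]}$, and via Lemma $\ref{l1'}$ and $(\ref{e3})$ that $[D_{KO}(x_{i}x_{j}),D_{KO}(x_{k})]=D_{KO}(T_{H}(x_{i}x_{j})(x_{k}))$ for $1\leq i\leq j\leq 2n$, which one writes out explicitly. Decompose $KO_{[-1]}=V_{\bar 0}\oplus V_{\bar 1}$ into even and odd parts, $V_{\bar 1}=\mathrm{span}_{\mathbb{F}}\{D_{KO}(x_{i})\mid i\in\overline{1,n}\}$ and $V_{\bar 0}=\mathrm{span}_{\mathbb{F}}\{D_{KO}(x_{i})\mid i\in\overline{n+1,2n}\}$. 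Reading off the brackets: the operators $D_{KO}(x_{i}x_{j})$ with $i\in\overline{1,n}$, $j\in\overline{n+1,2n}$ give the full $\mathfrak{gl}_{n}$ acting as the (irreducible) natural module on each of $V_{\bar 0}$ and $V_{\bar 1}$; the symmetric ones $D_{KO}(x_{i}x_{j})$ with $i,j\in\overline{1,n}$ (including the squares $D_{KO}(x_{i}^{2})$, which act by $2x_{i}\partial_{i'}$) annihilate $V_{\bar 1}$ and send $V_{\bar 0}$ onto $V_{\bar 1}$; and the antisymmetric ones $D_{KO}(x_{i}x_{j})$ with $i,j\in\overline{n+1,2n}$ annihilate $V_{\bar 0}$ and send $V_{\bar 1}$ onto $V_{\bar 0}$. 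Hence a nonzero $KO_{0}$-submodule $W$ of $KO_{[-1]}$ meets $V_{\bar 0}$ or $V_{\bar 1}$ nontrivially, is carried by $\mathfrak{gl}_{n}$ onto all of that summand, and then by the symmetric/antisymmetric operators onto the other, so $W=KO_{[-1]}$.

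For (II), observe by $(\ref{liuee1})$ that $[D_{KO}(x_{2n+1}),D_{KO}(1)]=2D_{KO}(1)$, so $\mathrm{ad}\,D_{KO}(x_{2n+1})$ acts as $2$ on $\overline{KO_{[-2]}}$ and as $1$ on $KO_{-1}/KO_{0}$; since $2\neq 1$ in $\mathbb{F}$, any $KO_{0}$-submodule $N$ of $KO/KO_{0}$ splits as $N=(N\cap\overline{KO_{[-2]}})\oplus(N\cap KO_{-1}/KO_{0})$. If $N\neq 0$ then either $N\cap(KO_{-1}/KO_{0})\neq 0$, forcing $KO_{-1}/KO_{0}\subseteq N$ by (I), or $N=\overline{KO_{[-2]}}$; but the latter is impossible since $\overline{KO_{[-2]}}$ is not $KO_{0}$-stable — indeed for $i\in\overline{1,2n}$ one computes $[D_{KO}(x_{2n+1}x_{i}),D_{KO}(1)]=\pm 2D_{KO}(x_{i})\in KO_{[-1]}$, which is nonzero, so $[KO_{0},KO_{[-2]}]\not\subseteq KO_{[-2]}+KO_{0}$. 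This proves (II), hence $KO_{-1}/KO_{0}$ is the unique irreducible $KO_{0}$-submodule of $KO/KO_{0}$. For the last assertion: if $\varphi$ is an automorphism of $KO$, then $\varphi(KO_{0})=KO_{0}$ by Proposition $\ref{t1}$, so $\varphi$ induces a bijection of $KO/KO_{0}$ that intertwines the $KO_{0}$-actions up to the automorphism $\varphi|_{KO_{0}}$; such a map sends irreducible $KO_{0}$-submodules to irreducible $KO_{0}$-submodules, hence fixes $KO_{-1}/KO_{0}$, and therefore $\varphi(KO_{-1})=KO_{-1}$.

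The main obstacle is part (I): isolating the $KO_{[0]}$-module $KO_{[-1]}$ and proving its irreducibility. The bracket computations feeding it are short consequences of $(\ref{liuee1})$, $(\ref{e3})$ and Lemma $\ref{l1'}$, but organizing the operators into the $\mathfrak{gl}_{n}$-part and the symmetric/antisymmetric parts and checking that these interchange $V_{\bar 0}$ and $V_{\bar 1}$ is the real content — and it is exactly here that one needs the divided-power variables to supply the squares $x_{i}^{2}$ and, for the passage $V_{\bar 1}\to V_{\bar 0}$, enough exterior variables among $x_{n+1},\dots,x_{2n}$, i.e.\ the hypothesis $n\geq 2$.
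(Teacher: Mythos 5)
Your argument is correct, and its decisive computation is the same one the paper uses: the bracket $[D_{KO}(x_{i}x_{2n+1}),D_{KO}(1)]=2D_{KO}(x_{i})$, which forces any submodule of $KO/KO_{0}$ containing a coset with nonzero $KO_{[-2]}$-component to contain all of $KO_{-1}/KO_{0}$. You differ from the paper in two ways. First, you organize the uniqueness step through the eigenvalue splitting of $\mathrm{ad}\,D_{KO}(x_{2n+1})$ (eigenvalues $2$ and $1$ on the $[-2]$ and $[-1]$ parts), whereas the paper simply normalizes a general element to $D_{KO}(1)+y'$ and subtracts the part of the bracket that lands in $KO_{0}\subseteq M$; both routes are sound, yours makes the case distinction explicit, the paper's is shorter. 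Second, and more substantially, you actually prove the irreducibility of $KO_{-1}/KO_{0}$ (via the $\mathfrak{gl}_{n}$ coming from the mixed elements $D_{KO}(x_{i}x_{j})$, $i\leq n<j$, together with the symmetric and antisymmetric elements interchanging $V_{\bar 0}$ and $V_{\bar 1}$), a point the paper dismisses with ``Clearly''; your computations here check out, including the identification of the action of $D_{KO}(x_{2n+1})$ as the identity on $KO_{[-1]}$ and of $D_{KO}(x_{i}^{2})$ as $2x_{i}\partial_{i'}$. Your closing caveat that the passage $V_{\bar 1}\to V_{\bar 0}$ needs $n\geq 2$ is not a defect but a genuine observation: for $n=1$ the line spanned by $D_{KO}(x_{1})$ modulo $KO_{0}$ is a one-dimensional (hence irreducible) $KO_{0}$-submodule and $KO_{-1}/KO_{0}$ is reducible, so the lemma requires $n\geq 2$, a hypothesis the paper leaves implicit. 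The only conventions worth stating explicitly are that submodules are taken $\mathbb{Z}_{2}$-graded (which suffices for the application, since automorphisms preserve parity) and, as you note, that the map induced by an automorphism $\varphi$ on $KO/KO_{0}$ intertwines the actions up to $\varphi|_{KO_{0}}$, which still carries irreducible submodules to irreducible submodules; here Proposition \ref{t1} is used exactly as in the paper.
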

\begin{proof}
Clearly, $KO _{-1}/KO _{0}$ is an irreducible $KO _{0}$-submodule.
To show the uniqueness, suppose $M/KO _{0}$ is a nonzero $KO _{0}$-submodule of $KO /KO_{0},$
where $M\supset KO _{0}$ is a $KO _{0}$-submodule of $KO.$
For $0\neq y\in M,$ one may write $y=D_{KO}(1)+y',$ where $y'\in KO
_{-1}.$ Then
\begin{eqnarray*}
[D_{KO}(x_{i}x_{2n+1}), D_{KO}(1)+y']\in M\quad\mbox{for all\;} i\in \overline{1,2n}.
\end{eqnarray*}
Note that $[D_{KO}(x_{i}x_{2n+1}), y'] \in M.$ We have
$$2D_{KO}(x_{i})=[D_{KO}(x_{i}x_{2n+1}),
D_{KO}(1)]\in M$$ for all $i\in \overline{1,2n}.$ Therefore, $KO
_{-1}/KO _{0}\subseteq M/KO _{0}.$ The proof is complete.
\end{proof}
\begin{lemma}\label{p14}
$KO _{i}=\{y\in KO _{i-1}\mid [y, KO _{-1}]\subseteq KO _{i-1}\} $
for all $i\geq1.$
\end{lemma}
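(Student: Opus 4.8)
First I would dispose of the easy inclusion $KO_i\subseteq\{y\in KO_{i-1}\mid [y,KO_{-1}]\subseteq KO_{i-1}\}$: the principal filtration is descending, and since the principal grading is a $\mathbb{Z}$-grading of the Lie superalgebra one has $[KO_{[a]},KO_{[b]}]\subseteq KO_{[a+b]}$, so $a\geq i$ and $b\geq -1$ force $a+b\geq i-1$, whence $[KO_i,KO_{-1}]\subseteq KO_{i-1}$. For the reverse inclusion I would take $y\in KO_{i-1}$ with $[y,KO_{-1}]\subseteq KO_{i-1}$ and write $y=w+y'$ with $w\in KO_{[i-1]}$ and $y'\in KO_i$ (using $KO_{i-1}=KO_{[i-1]}\oplus KO_i$); the goal is $w=0$. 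For any $z\in KO_{[-1]}$ we have $[y',z]\in[KO_i,KO_{[-1]}]\subseteq KO_{i-1}$, hence $[w,z]=[y,z]-[y',z]\in KO_{i-1}$; but $[w,z]\in KO_{[i-2]}$, and $KO_{[i-2]}\cap KO_{i-1}=0$ because $i\geq 1$ gives $i-2<i-1$. So $[w,z]=0$ for every $z\in KO_{[-1]}$, and the lemma reduces to the transitivity statement: \emph{if $j\geq 0$, $w\in KO_{[j]}$ and $[w,KO_{[-1]}]=0$, then $w=0$}.

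To prove this claim I would write $w=D_{KO}(a)$ with $a\in\mathcal{O}$ homogeneous of principal degree $j+2\geq 2$. Using $(\ref{liuee1})$ and the explicit formula for $D_{KO}$, for each $i\in\overline{1,2n}$ one computes
\[[D_{KO}(a),D_{KO}(x_i)]=D_{KO}\!\left((-1)^{\mu(i')\mathrm{p}(a)}\partial_{i'}(a)-(-1)^{\mathrm{p}(a)}\partial_{2n+1}(a)\,x_i\right),\]
so by injectivity of $D_{KO}$ the hypothesis $[w,KO_{[-1]}]=0$ is equivalent to
\[\partial_r(a)=(-1)^{\mathrm{p}(a)(\mu(r)+1)}\,\partial_{2n+1}(a)\,x_{r'}\qquad\text{for all }r\in\overline{1,2n}.\]
If $\partial_{2n+1}(a)=0$, then $\partial_r(a)=0$ for every $r\in\overline{1,2n+1}$, whence $a\in\mathbb{F}$ and so $a=0$ by the degree bound, i.e.\ $w=0$. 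If $\partial_{2n+1}(a)\neq 0$, I would write $a=x_{2n+1}b+a_0$ with $a_0,b\in\mathcal{O}(n,n)$, so $\partial_{2n+1}(a)=\pm b$ and the right-hand side of the displayed identity lies in $\mathcal{O}(n,n)$; comparing $x_{2n+1}$-parts forces $\partial_r(b)=0$ for all $r\in\overline{1,2n}$, hence $b$ is a nonzero scalar $c$, its principal degree is $2$, so $j=0$ and $\mathrm{p}(a)=\bar 1$. Comparing the $x_{2n+1}$-free parts then reads $\partial_r(a_0)=(-1)^{\mu(r)+1}c\,x_{r'}$ for all $r\in\overline{1,2n}$; inspecting the degree-$2$ monomials of $\mathcal{O}(n,n)$, this forces $a_0=\sum_{s=1}^{n}\gamma_s x_sx_{s'}$, and then evaluating $\partial_s$ and $\partial_{s'}$ gives $\gamma_s=-c$ and simultaneously $\gamma_s=c$ for each $s$, impossible since $p>2$. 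Hence $\partial_{2n+1}(a)=0$ and $w=0$.

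The main obstacle is this transitivity claim, and within it the degree-zero case: one must eliminate the would-be $\mathrm{ad}\,KO_{[-1]}$-central elements $D_{KO}(x_{2n+1})$ and $D_{KO}\!\bigl(\sum_s x_sx_{s'}\bigr)$, and this is precisely where the sign function $\mu$ in the bracket and the hypothesis $p>2$ enter. The cases $j\geq 1$ come for free, since the argument above forces $j=0$ as soon as $\partial_{2n+1}(a)\neq 0$.
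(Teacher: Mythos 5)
Your proof is correct, and its skeleton agrees with the paper's: both dispose of the easy inclusion by the grading, reduce to showing that the degree-$(i-1)$ homogeneous component $w$ of $y$ satisfies $[w,KO_{[-1]}]=0$, and then prove the ``transitivity'' statement that a nonzero homogeneous element of nonnegative principal degree cannot centralize $KO_{[-1]}$. Where you differ is in how that last statement is established. The paper expands $y_{i-1}=\sum b_{\alpha,u}D_{KO}(x^{(\alpha)}x^u)$ in the standard basis and kills coefficients case by case by bracketing with $D_{KO}(x_{k'})$, then with $D_{KO}(x_{l'})$ for $l\in\overline{n+1,2n}$, and finally handles $D_{KO}(x_{2n+1})$ separately. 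You instead use $(\ref{liuee1})$ and the injectivity of $D_{KO}$ to convert the hypothesis into the uniform first-order identity $\partial_r(a)=(-1)^{\mathrm{p}(a)(\mu(r)+1)}\partial_{2n+1}(a)x_{r'}$ for $r\in\overline{1,2n}$ and solve it, splitting on whether $\partial_{2n+1}(a)$ vanishes. This buys a cleaner and arguably more transparent treatment of the only delicate point: the possible interference in degree zero between $x_{2n+1}$ and $\sum_s x_sx_{s'}$, which your argument isolates explicitly and eliminates using $p>2$, whereas the paper's coefficient-killing step treats the $\alpha\neq 0$ and $\alpha=0$ contributions separately and leaves this interaction largely implicit. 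One small point you should make explicit: when you write $w=D_{KO}(a)$ and use $\mathrm{p}(a)$, you are tacitly assuming $a$ is $\mathbb{Z}_2$-homogeneous; this is harmless, since for homogeneous $z\in KO_{[-1]}$ the brackets of the even and odd parts of $w$ with $z$ lie in different parity components, so each part centralizes $KO_{[-1]}$ separately, but the reduction deserves a sentence.
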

\begin{proof}
Put
$$
\mathfrak{h}_{i}:=\{y\in KO _{i-1}\mid [y, KO _{-1}]\subseteq KO
_{i-1}\}.
$$
Clearly, $KO _{i}\subseteq \mathfrak{h}_{i}.$ For all $y\in
\mathfrak{h}_{i},$ we may write $y:=\sum_{j\geq i-1}y_{j},$ where
$y_{j}\in KO _{[j]}.$ By the definition of $\mathfrak{h}_{i},$
$[y_{i-1},KO _{-1}]=0.$ Let
$y_{i-1}:=\sum_{\alpha,u}b_{\alpha,u}D_{KO}(x^{(\alpha)}x^{u}),$
where $\alpha\in \mathbb{A},$ $u\in \mathbb{B},$
$D_{KO}(x^{(\alpha)}x^{u})\in KO _{[i-1]}\subseteq KO _{0},$
$b_{\alpha,u}\in \mathbb{F}.$ For any fixed $\beta\neq 0$ with
$\beta_k\geq 1$ for some $k\in\overline{1,n}$, we have
\begin{eqnarray*}
0&=&\Big[\sum _{\alpha,u}b_{\alpha,u}D_{KO}(x^{(\alpha)}x^u),
D_{KO}(x_{k^{\prime}})\Big]\\
&=&\sum
_{\alpha,u}b_{\alpha,u}D_{KO}(\partial_k(x^{(\alpha)}x^u)-(-1)^{{\rm
p}(x^u)}\partial_{2n+1}(x^{(\alpha)}x^u)x_{k^\prime}).
\end{eqnarray*}
Consequently,  $b_{\alpha,u}=0$ whenever $\alpha\neq 0$. It remains
to consider the case $\alpha=0$. Fix  any  $v\neq \emptyset.$ If
 there is  $l\in \overline{n+1,
2n}$ such that $l\in v.$ Then
\begin{eqnarray*}
0&=&\Big[\sum _{0,u}b_{0,u}
D_{KO}(x^u),D_{KO}(x_{l^{\prime}})\Big]
\\
&=&\sum
_{0,u}b_{0,u}D_{KO}((-1)^{{\rm p}(x^u)}\partial_l(x^u)-(-1)^{{\rm
p}(x^u)}\partial_{2n+1}(x^u)x_{l^\prime}).
\end{eqnarray*} It follows that
 $b_{0,v}=0,$ where $ \langle 2n+1\rangle\neq v\in \mathbb{B}.$ Taking $i\in \overline{1,2n},$
since
 \begin{eqnarray*}
 0=[b_{0,\langle 2n+1\rangle}D_{KO}(x_{2n+1}), D_{KO}(x_{i})]=b_{0,\langle
 2n+1\rangle}D_{KO}(x_{i}),
 \end{eqnarray*}
 we have $b_{0,\langle
 2n+1\rangle}=0.$   Therefore, $y_{i-1}=0$ and then
  $\frak{h}_{i}\subset KO_i$.
\end{proof}

As a corollary we give a characterization of the automorphisms of $KO$:
\begin{theorem}\label{t17}
Two automorphisms of $KO$ coincide if and only if they coincide on the $-1$ component
$KO_{[-1]}.$
\end{theorem}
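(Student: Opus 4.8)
The plan is to exploit the invariance of the principal filtration (Theorem~\ref{t15}) to reduce the statement to an action on the associated graded object, and there to use that $KO_{[-1]}$ together with $KO_0$ generates the whole of $KO$. First I would recall that by Theorem~\ref{t15} every automorphism $\varphi$ satisfies $\varphi(KO_i)=KO_i$ for all $i$, so $\varphi$ induces a well-defined automorphism $\bar\varphi_{[i]}$ of each graded piece $KO_{[i]}=KO_i/KO_{i+1}$; in particular $\varphi$ preserves $KO_{[-2]}=KO_0/KO_{-1}\text{-quotient}$ data — more precisely it acts on $KO_{-1}$ (invariant by Lemma~\ref{l13}) and on $KO_0$ (invariant by Proposition~\ref{t1}). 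The ``only if'' direction is trivial. For ``if'', suppose $\varphi,\psi$ are automorphisms with $\varphi|_{KO_{[-1]}}=\psi|_{KO_{[-1]}}$; set $\theta:=\psi^{-1}\varphi$, an automorphism acting as the identity on $KO_{[-1]}$. It suffices to show $\theta=\mathrm{id}$.

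The key step is to propagate the triviality of $\theta$ upward and downward through the principal grading. Downward: $KO_{[-2]}=\mathbb{F}\,D_{KO}(1)$, and from the bracket relations in $(\ref{e2})$ and $(\ref{liuee1})$ one has $[D_{KO}(x_i),D_{KO}(x_j)]$ lying in $KO_{[-2]}$ for suitable $i,j$ (indeed the symplectic-type pairing on $KO_{[-1]}$ lands in $KO_{[-2]}$), and it is nonzero for an appropriate choice; since $\theta$ fixes $KO_{[-1]}$ pointwise it must fix this bracket, hence act as the identity on $KO_{[-2]}$ as well. Upward: here I would use Lemma~\ref{p14}, which characterizes $KO_i$ (and, passing to the graded object, should let one characterize $KO_{[i]}$ for $i\ge 1$) purely in terms of the bracket with $KO_{-1}$; and for $KO_{[0]}$ I would argue that $KO_{[0]}$ embeds into $\mathrm{Hom}(KO_{[-1]},KO_{[-1]})\oplus(\text{terms into }KO_{[-2]})$ faithfully — i.e.\ an element of $KO_{[0]}$ is determined by how it brackets against $KO_{[-1]}$ — so that $\theta|_{KO_{[0]}}$, which commutes with the identity action on $KO_{[-1]}$ and on $KO_{[-2]}$, must itself be the identity. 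Concretely one checks $\mathrm{span}_{\mathbb{F}}\{D_{KO}(x_ix_j), D_{KO}(x_{2n+1})\}$ acts faithfully on $KO_{[-1]}\oplus KO_{[-2]}$ via $(\ref{liuee1})$, using that $[D_{KO}(x_ix_j),D_{KO}(x_k)]$ and $[D_{KO}(x_{2n+1}),D_{KO}(x_k)]$ together separate points.

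Once $\theta$ is the identity on $KO_{[-1]}$, $KO_{[-2]}$ and $KO_{[0]}$, I would finish by induction on $i\ge 1$: by Lemma~\ref{p14} applied to the graded pieces, $KO_{[i]}$ is recovered from $KO_{[i-1]}$ as those elements whose bracket with $KO_{[-1]}$ lands in $KO_{[i-1]}$, and more usefully the bracket map $KO_{[i]}\times KO_{[-1]}\to KO_{[i-1]}$ is non-degenerate in the first variable (an element of $KO_{[i]}$ killing all of $KO_{[-1]}$ would, by the computation in the proof of Lemma~\ref{p14}, be zero). Since $\theta$ is the identity on $KO_{[i-1]}$ by inductive hypothesis and on $KO_{[-1]}$, for $y\in KO_{[i]}$ and all $z\in KO_{[-1]}$ we get $[\theta(y)-y,z]=0$, whence $\theta(y)=y$. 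As $KO=\bigoplus_{i\ge-2}KO_{[i]}$ this gives $\theta=\mathrm{id}$, i.e.\ $\varphi=\psi$.

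The main obstacle I anticipate is the non-degeneracy claim for the bracket pairing $KO_{[i]}\times KO_{[-1]}\to KO_{[i-1]}$ — equivalently, showing that no nonzero homogeneous element of positive principal degree centralizes $KO_{[-1]}$ modulo lower filtration. The proof of Lemma~\ref{p14} already contains exactly the needed computation (it shows $[\,\cdot\,,D_{KO}(x_{k'})]$ detects the $x^{(\alpha)}$-part and $[\,\cdot\,,D_{KO}(x_l')]$ the odd-exterior part, with a separate check for the lone $x_{2n+1}$ term), so the task is to repackage that argument as a statement about the associated graded and to handle the base cases $i=0$ and $i=-2$ by hand using $(\ref{e2})$. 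The degree-$0$ faithfulness is slightly delicate because $KO_{[0]}$ also maps into $KO_{[-2]}$, not only into $\mathrm{End}(KO_{[-1]})$, so one must use both target components; but this is a finite, explicit linear-algebra check on the spanning set of $(\ref{e2})$.
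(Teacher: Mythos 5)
Your proposal is correct and follows essentially the same route as the paper: invariance of the principal filtration (Theorem~\ref{t15}) plus the observation that no nonzero element of nonnegative principal degree annihilates $KO_{[-1]}$ under the bracket (exactly the computation in the proof of Lemma~\ref{p14}), together with the same bracket trick $D_{KO}(1)=[D_{KO}(x_{1}),D_{KO}(x_{1'})]$ for the $-2$ component. The only difference is one of packaging: the paper delegates the step ``agreement on $KO_{[-1]}$ forces agreement on $KO_{-1}$'' to the analogous argument in \cite{lz2}, whereas you carry out that induction explicitly on the graded components.
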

\begin{proof}
Let $\phi$ and $\psi$ be the automorphisms of $KO.$ It suffices to
prove that $\phi\mid_{KO_{[-1]}}=\psi\mid_{KO_{[-1]}}$ implies
$\phi=\psi.$ As in \cite[Corollary 18]{lz2}, using Theorem \ref{t15}
one can prove that $\phi\mid_{KO_{-1}}=\psi\mid_{KO_{-1}}.$ Since
$$\phi(D_{KO}(1))=\phi([D_{KO}(x_{1}),D_{KO}(x_{1'})])=
\psi([D_{KO}(x_{1}),D_{KO}(x_{1'})])=\psi(D_{KO}(1)),$$ we have
$\phi\mid_{KO_{[-2]}}=\psi\mid_{KO_{[-2]}}.$ Then $\phi=\psi$ and the
proof is complete.
\end{proof}

We are now in  position to state the final main result in this
paper, which
 says that the parameter $n$ defining the odd Contact superalgebra $KO(n,n+1)$ is  intrinsic and then all the
 infinite-dimensional odd Contact superalgebras are classified up to isomorphisms.
\begin{theorem}\label{t16}
$KO(n,n+1)\cong KO(m,m+1)$ if and only if $n=m.$
\end{theorem}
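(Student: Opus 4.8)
The implication ``$n=m\Rightarrow KO(n,n+1)\cong KO(m,m+1)$'' is trivial, so the task is to show that an isomorphism $\varphi\colon KO(n,n+1)\to KO(m,m+1)$ forces $n=m$. The idea is that the apparatus of Sections~2 and 3 produces not merely automorphism-invariant subspaces of a single $KO$, but subspaces that are canonically attached to the abstract Lie superalgebra; hence any isomorphism must respect them, and one then reads off $n$ from the dimension of a filtration quotient.

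First I would observe that every ingredient used to pin down $KO_0$ and $KO_{-1}$ in the proof of Theorem~\ref{t15} is formulated purely in terms of the Lie superalgebra structure: the sets $\mathrm{nil}$, $\mathrm{span}_{\mathbb{F}}\mathrm{nil}$ and $\mathrm{Nil}$ of a subalgebra, normalizers and centralizers, the $\mathbb{Z}_2$-grading (an isomorphism of Lie superalgebras is even), and the notion of an irreducible submodule. Consequently $\varphi$ carries the subspaces $\mathfrak{T}$, $\mathfrak{Q}$, $\mathfrak{M}$ built inside $KO(n,n+1)$ onto the corresponding subspaces built inside $KO(m,m+1)$. Applying Propositions~\ref{p12}, \ref{p7} and \ref{t1} to each of the two algebras then gives $\varphi\big(KO(n,n+1)_0\big)=KO(m,m+1)_0$, and applying Lemma~\ref{l13} on both sides gives $\varphi\big(KO(n,n+1)_{-1}\big)=KO(m,m+1)_{-1}$.

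Second, $\varphi$ therefore restricts to a linear isomorphism between $KO(n,n+1)_{-1}/KO(n,n+1)_0$ and $KO(m,m+1)_{-1}/KO(m,m+1)_0$. By (\ref{e2}) the first space has the same dimension as $KO(n,n+1)_{\mathbf{p},[-1]}=\mathrm{span}_{\mathbb{F}}\{D_{KO}(x_i)\mid i\in\overline{1,2n}\}$, namely $2n$ (recall $\mathrm{Ker}(D_{KO})=0$), while the second has dimension $2m$. Hence $2n=2m$, i.e. $n=m$, and the proof is complete.

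The main obstacle is the bookkeeping hidden in the first step. One must re-run Lemmas~\ref{l1}--\ref{l2}, Propositions~\ref{p12} and \ref{p7}, and Lemma~\ref{l13} with a single ambient algebra replaced by the two algebras related by $\varphi$, and check that each auxiliary element exhibited there --- for instance $D_{KO}(x^{((k+1)\varepsilon_j)})$, the $D_{KO}(x_ix_j)$ with $i\ne j'$, the $D_{KO}(x_ix_{i'}x_{2n+1})$, and the elements of $KO_1$ invoked in Lemma~\ref{l1}(4) --- enters only through algebraically detectable properties (ad-nilpotency, membership in the subalgebra generated by a distinguished set, action on a quotient module), so that the identifications of $KO_0$ and $KO_{-1}$ are genuinely transported by $\varphi$ rather than tied to a fixed coordinate realization. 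Once this transfer is in place, the dimension count above is immediate; moreover the same transport yields $\varphi\big(KO(n,n+1)_i\big)=KO(m,m+1)_i$ for every $i$, the isomorphism-level analogue of Theorem~\ref{t15}.
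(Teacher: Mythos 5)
Your proposal is correct and follows essentially the same route as the paper: transport the intrinsically defined subspaces $\mathfrak{T}$, $\mathfrak{Q}$, $\mathfrak{M}$ across the isomorphism to get $\varphi\bigl(KO(n,n+1)_0\bigr)=KO(m,m+1)_0$, then read off $n$ from a dimension count on a quotient. The only (harmless) difference is that the paper compares $\dim\bigl(KO/KO_0\bigr)=2n+1$ directly, whereas you additionally invoke Lemma \ref{l13} to identify $KO_{-1}$ and compare $\dim\bigl(KO_{-1}/KO_0\bigr)=2n$; both yield $n=m$.
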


\noindent {\textit{Proof of Theorem \ref{t16}.}}
One direction is obvious. Assume
that $\sigma: KO(n,n+1)\longrightarrow KO(m,m+1)$ is an isomorphism
of Lie superalgebras. Clearly,
$$
\sigma(\mathrm{Nor}_{KO(n,n+1)_{\bar{0}}}(\mathrm{Nil}(KO(n,n+1)_{\bar{0}})))
=\mathrm{Nor}_{KO(m,m+1)_{\bar{0}}}(\mathrm{Nil}(KO(m,m+1)_{\bar{0}})).
$$
In view of  the proof of Proposition \ref{p12}, we have
$$KO(n,n+1)_{0}\cap
KO(n,n+1)_{\bar{0}}=\mathrm{Nor}_{KO(n,n+1)_{\bar{0}}}(\mathrm{Nil}(KO(n,n+1)_{\bar{0}})).$$
Therefore,
\begin{eqnarray}\label{e1}
\sigma(KO(n,n+1)_{0}\cap KO(n,n+1)_{\bar{0}})=KO(m,m+1)_{0}\cap
KO(m,m+1)_{\bar{0}}.
\end{eqnarray}
Recall  that
$$\mathfrak{Q}= \{y\in KO(n,n+1)_{\bar{1}}\mid
[y,KO(n,n+1)_{\bar{1}}]\subseteq KO(n,n+1)_{0}\cap
KO(n,n+1)_{\bar{0}} \}$$
 and
 $$\mathfrak{M}=\{y\in KO(n,n+1)_{\bar{1}}\mid
[y,\mathfrak{Q}]\subseteq\mathrm{Nil}(KO(n,n+1)_{\bar{0}})\}.
$$
 By  Lemma \ref{p5} and Proposition \ref{p7}, we have
\begin{eqnarray}\label{e4}
\sigma(KO(n,n+1)_{0}\cap KO(n,n+1)_{\bar{1}})=KO(m,m+1)_{0}\cap
KO(m,m+1)_{\bar{1}}.
\end{eqnarray}
By (\ref{e1}) and ({\ref{e4}}), we have
$$
\sigma(KO(n,n+1)_{0})=KO(m,m+1)_{0}.
$$
Therefore $\sigma$ induces an isomorphism of $\mathbb{Z}_{2}$-graded
vector spaces
$$
\sigma': KO(n,n+1)/KO(n,n+1)_0\longrightarrow KO(m,m+1)/KO(m,m+1)_0.
$$
Since $KO(n,n+1)/KO(n,n+1)_0\cong KO(n,n+1)_{[-2]}\oplus
KO(n,n+1)_{[-1]}$
 as $\mathbb{Z}_{2}$-graded vector spaces,
we have
$$\mathrm{dim}(KO(n,n+1)_{[-2]}\oplus KO(n,n+1)_{[-1]})
=\mathrm{dim}(KO(m,m+1)_{[-2]}\oplus KO(m,m+1)_{[-1]}).$$ This
implies that $n+1=m+1,$ that is, $n=m.$ The proof is complete.


\begin{thebibliography}{99}









\bibitem{k2} V. G. Kac. Classification of infinite-dimensional
simple linearly compact Lie superalgebras. \textit{Adv. Math.}
\textbf{139} (1998): 1--55.

 \bibitem{lh}   W.-D. Liu and Y.-H. He. Finite-dimensional special odd Hamiltonian superalgebras in prime characteristic.
 \textit{Commun. Contemp. Math.} \textbf{11}(4) (2009): 523--546.


 \bibitem{lzw} W.-D. Liu, Y.-Z. Zhang, and X.-L. Wang.
 The derivation algebra of the Cartan-type Lie superalgebra $HO.$
 \textit{J. Algebra} \textbf{273} (2004): 176--205.
\bibitem{lz1}   W.-D. Liu and Y.-Z. Zhang. Finite-dimensional odd Hamiltonian superalgebras over a field
of prime characteristic.
\textit{J. Aust. Math. Soc.} \textbf{79}(2005): 113--130.
\bibitem{lz2}   W.-D. Liu and Y.-Z. Zhang. Infinite-dimensional modular odd Hamiltonian Lie
 superalgebras. \textit{Commun. Algebra.} \textbf{32}(2004): 2341--3257.





 \bibitem{sf} H. Strade and R. Farnsteiner. \textit{Modular Lie Algebras and Their
Representations}. New York: Marcel Dekker, 1988.




\bibitem{zf}  Y.-Z. Zhang and H.-C. Fu. Finite-dimensional Hamiltonian
 Lie superalgebras. \textit{Commun. Algebra} \textbf{30} (2002): 2651--2673.


\bibitem{zl5} Y.-Z. Zhang and W.-D. Liu. Infinite-dimensional modular Lie Superalgebras $W$ and
$S$ of Cartan Type. \textit{Algebra  Colloq.} \textbf{132} (2006):
197--210.
\bibitem{zn}  Y.-Z. Zhang and J.-Z. Nan. Finite-dimensional Lie superalgebras $W(m,n,\underline{t})$ and $S(m,n,\underline{t})$
 of Cartan-type. \textit{Chin.  Adv. Math.} \textbf{27} (1998): 240--246.
\end{thebibliography}
\end{document}